\DeclareMathOperator{\D}{d}
\DeclareMathOperator{\Tr}{Tr}
\DeclareMathOperator{\md}{d}
\DeclareMathOperator{\supp}{supp}
\DeclareMathOperator{\R}{Re}
\def\qed{\hfill$ \blacksquare$}
\def\eor{\hfill$ \square$}
\newtheorem{theorem}{Theorem}[section]
\newtheorem{proposition}[theorem]{Proposition}
\newtheorem{example}[theorem]{Example}
\newenvironment{proof}[1][Proof]{\begin{trivlist}
\item[\hskip \labelsep {\bfseries #1}]}{\end{trivlist}}
\newenvironment{remark}[1][Remark]{\begin{trivlist}
\item[\hskip \labelsep {\bfseries #1}]}{\end{trivlist}}
\begin{document}

\selectlanguage{english}
\title{\textsf{{ A Simple Formula for Scalar Curvature of Level Sets in Euclidean Spaces}}}\author{Yajun Zhou
\\\begin{small}\textsc{Program in Applied and Computational Mathematics, Princeton University, Princeton, NJ 08544}\end{small}
}
\date{}
\maketitle

\def\abstractname{}
\begin{abstract}
     \noindent A simple formula is derived for the Ricci scalar curvature of any smooth level set $\{ \psi(x_0,x_1,\dots,x_n)=C\}$ embedded in the Euclidean space $ \mathbb R^{n+1}$, in terms of the gradient $ \nabla\psi$ and the Laplacian $ \Delta\psi$.  Some applications are given to the geometry of low-dimensional $p$-harmonic functions and high-dimensional harmonic functions. \end{abstract}

\setcounter{section}{-1}
\section{Introduction}

Many problems in mathematical physics can be described by  ``inhomogeneous Laplace equations'' in the Euclidean space $ \Delta\psi(\bm r)=V(\bm r)$, which not only encompass the familiar examples of linear equations (\textit{e.g.}~the Poisson equation in electrostatics, the Helmholtz equation in wave propagation, the Schr\"odinger equation in non-relativistic quantum mechanics) but also some important classes of nonlinear partial differential equations (including, but not limited to the Landau-Ginzburg model for superconductivity, the Gross-Pitaevski equation for Bose-Einstein condensates and the Navier-Stokes equations for viscous fluids).

In this series of works, we study the geometry of level sets associated with solutions to the  ``inhomogeneous Laplace equations''  $ \Delta\psi(\bm r)=V(\bm r)$, starting with the current article which revolves around a simple formula for the Ricci scalar curvature $ \mathcal R(\bm r)$ at any point on a smooth level set $ \{\psi(\bm r)=C\}$ embedded in the Euclidean space of arbitrary dimension:
\begin{align*}
\mathcal R(\bm r)=-\Delta\log |\nabla\psi(\bm r)|+\nabla\cdot\left[ \Delta\psi(\bm r)\frac{\mathcal \nabla\psi(\bm r)}{ |\nabla\psi(\bm r)|^{2}} \right].
\end{align*}
The right-hand side of the formula above invokes the gradient $ \nabla\psi(\bm r)$ and the Laplacians $ \Delta\psi(\bm r)$, $ \Delta\log|\nabla\psi(\bm r)|$ evaluated in the Cartesian coordinate system of the ambient Euclidean space. Likewise, the divergence operator $ \nabla\cdot$ also refers to the flat space formulation. By convention, we will restrict our attention to cases where the gradient is non-vanishing $ \nabla\psi(\bm r)\neq\mathbf0$, so that the appearance of $ |\nabla\psi(\bm r)|^2$ in the denominator is meaningful.

We call the proposed scalar curvature identity a ``simple formula'' because it only involves some familiar operations and quantities of physical interest.  Especially, in many practical problems, the modulus of the gradient $ |\nabla\psi(\bm r)|$ provides a measure of the ``field intensity'', and $ \Delta\psi(\bm r)=V(\bm r)$ is given as a prescribed function in the Euclidean space or as a certain function(al) of $ \psi(\bm r)$. The simple formula in the preceding paragraph thus allows us to deduce some useful information regarding the curvature properties of the level sets and their relations to ``field intensities'', which we will describe in several papers in this series.

In the current work, we open with  generic formulations for the geometry of level sets (\S\S\ref{subsec:metric}-\ref{subsec:Lap_evolv}), and follow the proof of the aforementioned scalar curvature identity (\S\ref{subsec:curv_id_proof}) with a brief discussion on its invariance properties (\S\ref{subsec:diff_inv}). We will then show that the degenerate cases of the scalar curvature identity in low dimensions recover some familiar results in classical analysis (\S\ref{subsec:CA}) and are consistent with various worked examples in classical physics (\S\ref{subsec:CP}). We close this proof-of-principle article with a simple low-dimensional application to $ p$-harmonic functions (\S\ref{sec:p_harmonic}) and a high-dimensional application to harmonic functions (\S\ref{sec:harmonic_sect_curv}).
\section{A Simple Scalar Curvature Formula for Level Sets\label{sec:curv_proof}}
\subsection{Metric, Connection and Curvatures for Level Sets Embedded in Euclidean Spaces\label{subsec:metric}  }A natural way to describe the local geometric properties of a smooth level set $ \{\psi(x_0,x_1,\dots,x_n)=C\}$  as a hypersurface in $ \mathbb R^{n+1}$ is to introduce the \textit{line element for curves on the hypersurface} as $\md
s^2=g_{ij}\md u^i\md u^j$, where a Latin index takes values from $1$ to $n$, and
every repeated index implies summation \cite[Ref.][\S7.3, pp.~68-69]{Dubrovin1}. Here,
$(u^1,\dots,u^n)$ are local curvilinear coordinates on the surface (as opposed to the Cartesian coordinates $ (x_0,x_1,\dots ,x_n)\in\mathbb R^{n+1}$), and $(g_{ij})$
is the (covariant) \textit{metric tensor}.
Superscript and subscript indices stand for contravariant and covariant components,
respectively. Written explicitly, the components of the metric tensor takes
the form $g_{ij}=\partial_i\bm r\cdot\partial_j\bm r$, where $\partial_i$
is a short-hand for $\partial/\partial u^i$. The contravariant metric tensor
$(g^{ij})$ is simply defined as the matrix inverse of $(g_{ij})$.

The tangent vectors $\partial_i\bm r $ at different points on the level set in question are connected by the {Gauss} formula \cite[Ref.][\S30.4, p.~311]{Dubrovin1}: $\partial_i\partial_j\bm
 r=\Gamma_{ij}^k\partial_k\bm r+b_{ij}\bm n $, where $\bm n$ is the unit normal
 vector of the hypersurface, the \textit{connection coefficients} are $\Gamma_{ij}^k=g^{k\ell}\partial_i\partial_j\bm r\cdot\partial_\ell\bm r$ and the \textit{coefficients of second fundamental form} are $b_{ij}=\partial_i\partial_j\bm r\cdot\bm n$. The connection coefficients can also be evaluated from the
Christoffel formula \cite[Ref.][\S29.3, p.~293]{Dubrovin1}: $\Gamma_{ij}^k=\frac12g^{k\ell}(\partial_ig_{\ell
j}+\partial_jg_{i\ell}-\partial_\ell g_{ij})$.

The components of the Weingarten transform  $\hat W=(b_i^j)$ is defined by $b_i^j=g^{jk}b_{ki}$
and appears in the Weingarten formula: $\partial_i\bm n=-b_i^j\partial_j\bm
r $, that is, $\md \bm n=-\hat W \md\bm r$ for infinitesimal changes tangent to the surface.
The \textit{mean curvature} is equal to  $ \frac1n$ times the trace of the Weingarten transform: $H:=\frac1n\Tr(\hat W)=\frac1ng^{ij}b_{ij}$.

The expression $R_{ij}=\partial_\ell\Gamma^\ell_{ij}-\partial_j\Gamma^\ell_{i\ell}+\Gamma^\ell_{ij}\Gamma^m_{\ell
m}-\Gamma^m_{i\ell}\Gamma^\ell_{jm}$ gives the $(i,j)^\mathrm{th}$ component of \textit{Ricci tensor} on the  hypersurface \cite[Ref.][\S37.4, p.~394]{Dubrovin1}, and the \textit{Ricci scalar curvature} is defined by $ \mathcal R=g^{ij}R_{ij}$.

For convenience, we shall borrow some notations and  terminologies from physics: ``force field'' is defined via the negative gradient $ \bm F(\bm r):=-\nabla\psi(\bm r)$; ``field intensity'' $ F:=|\bm F|$ is the modulus of the gradient; ``field line'' or ``$ \bm F$-line'' is the integral curve of the vector field $ \bm F(\bm r),\bm r\in\mathbb R^{n+1}$. By convention, we orient the normal vector
as $ \bm n=\bm F/ F$.

 To extend the discussion on an isolated level set ($n$-dimensional manifold) to a family
of hypersurfaces that fill a region in the ambient $ (n+1)$-dimensional Euclidean space,
we can employ the natural curvilinear coordinate system $\bm r(u^0,u^1,\dots,u^n)$
as follows: \begin{enumerate}
\item  For every point $\bm r$, the zeroth component of curvilinear coordinate
coincides with the ``level value'': $u^0=\psi(\bm r)$, thus the coordinate
$u^0$ is constant  on any level set; \item  Two distinct points
$\bm r(\psi,u^{1},\dots,u^n)$ and $\bm r(\psi',u'^1,\dots ,u'^n)$ can be linked by the integral curve ($\bm
F$-line) if and only if $u^{j}=u'^j,j=1,\dots,n$, so the $\bm F$-line acts as the curvilinear
coordinate
curve of $u^0$. Accordingly, $\bm F=-\nabla\psi$ is equivalent to $\partial_0\bm
r=-\bm n/F=-\bm F/F^{2} $.

\end{enumerate}

From the above curvilinear coordinate system constructed in a neighborhood surrounding a non-critical point $\bm r$ where $F(\bm r)=|\nabla\psi(\bm r)|\neq0 $, one can  define \textit{line element for curves in $(n+1)$-dimensional Euclidean space }as $\md
\underline s^2=g_{\mu\nu}\md u^\mu\md u^\nu$ where a Greek index takes values
$0,1,\dots,n$. By definition, $g_{00}=F^{-2}=1/g^{00}$ and $g_{0i}=g^{0i}=0$, so the decomposition of the Euclidean metric is a direct sum of the ``level value coordinate'' and ``level set metric''. One may extend
the definition of connection coefficients as $\partial_\mu\partial_\nu\bm
 r=\Gamma_{\mu\nu}^\lambda\partial_\lambda\bm r$, where the newly-arisen connection coefficients will be computed in the following proposition.
\begin{proposition}[Connection Coefficients]\label{prop:conn0} Suppose that a real-valued three-times continuously differentiable function $ \psi\in C^3(\mathfrak D;\mathbb R)$  satisfies the ``inhomogeneous Laplace equation'' \[\Delta\psi(\bm r)=V(\bm r),\quad \bm r\in \mathfrak D\]in a certain domain $\mathfrak D$ of the Euclidean space $ \mathbb R^{n+1}$, and has non-vanishing gradients therein $ F(\bm r):=|\nabla\psi(\bm r)|\neq0,\bm r\in\mathfrak  D\subset\mathbb R^{n+1}$. We endow the level sets of $\psi(\bm r)$  with unit normal $ \bm n=-\nabla \psi/|\nabla\psi|$ and decompose the Euclidean metric $ \md
\underline s^2=g_{\mu\nu}\md u^\mu\md u^\nu=F^{-2}(\D\psi)^2+g_{ij}\D u^i\D u^j$ into a direct sum of the ``level value coordinate'' and ``level set metric'', then we have the following computations for connection coefficients involving the index $0$:\begin{align}&\Gamma _{ij}^{0} =-Fb_{ij},\quad\Gamma _{j0}^{0} ={}-\frac{1}{F}\frac{\partial F}{\partial u^{j}}, \quad
\Gamma _{j0}^{k} =\frac{b_{j}^{k}}{F};\label{eq:Gamma1}\\&\Gamma _{ij}^{0} =-\frac{F^2}{2}\frac{\partial g_{ij}}{\partial\psi\ },\quad\Gamma _{00}^{j}=-\frac{1}{2}g^{jm}\frac{\partial g_{00}}{\partial u^{m}}=%
\frac{1}{F^{3}}g^{jm}\frac{\partial F}{\partial u^{m}};\label{eq:Gamma2}\\&\Gamma^0_{00}=\frac{1}{2}g^{00}\partial_0g_{00}=-\frac{\partial}{\partial\psi}\log F,\label{eq:Gamma3}\end{align}along with a modified version of the ``harmonic coordinate condition'':\begin{align}\label{eq:mod_harmonic_cond}
\Gamma^{0}:=g^{\mu\nu}\Gamma_{\mu\nu}^0=F^{2}(\Gamma^{0}_{00}-\Gamma^{m}_{m0})=-V.
\end{align}\end{proposition}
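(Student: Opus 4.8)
The plan is to read off every connection coefficient carrying at least one index $0$ from two complementary sources. The first is the Christoffel formula $\Gamma^\lambda_{\mu\nu}=\frac12 g^{\lambda\rho}(\partial_\mu g_{\rho\nu}+\partial_\nu g_{\mu\rho}-\partial_\rho g_{\mu\nu})$, which holds verbatim in the extended Greek-index range because $g_{\mu\nu}=\partial_\mu\bm r\cdot\partial_\nu\bm r$ is merely the flat Euclidean metric written in the curvilinear coordinates $(u^0,\dots,u^n)$, so the coefficients defined by $\partial_\mu\partial_\nu\bm r=\Gamma^\lambda_{\mu\nu}\partial_\lambda\bm r$ are its Levi--Civita symbols. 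The second source is geometric: the identity $\partial_0\bm r=-\bm n/F$, the Gauss formula $\partial_i\partial_j\bm r=\Gamma^k_{ij}\partial_k\bm r+b_{ij}\bm n$, and the Weingarten formula $\partial_i\bm n=-b_i^j\partial_j\bm r$. The engine throughout is the block structure $g_{00}=F^{-2}=1/g^{00}$, $g_{0i}=g^{0i}=0$.

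I would first compute $\Gamma^0_{ij}$ in two ways. In the Christoffel formula the two terms pairing a lowered $0$ with a Latin index vanish, leaving $\Gamma^0_{ij}=-\frac12 g^{00}\partial_0 g_{ij}=-\frac{F^2}{2}\,\partial g_{ij}/\partial\psi$ (recall $u^0=\psi$); projecting the Gauss formula onto $\bm n$ and using $\partial_0\bm r=-\bm n/F$ gives instead $-\Gamma^0_{ij}/F=b_{ij}$, i.e. $\Gamma^0_{ij}=-Fb_{ij}$. Equating the two outputs produces the auxiliary metric-evolution identity $\partial g_{ij}/\partial\psi=2b_{ij}/F$, which I set aside. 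The remaining coefficients involving $0$ come straight from differentiating $g_{00}=F^{-2}$: $\Gamma^0_{j0}=\frac12 g^{00}\partial_j g_{00}=-F^{-1}\partial_j F$; $\Gamma^0_{00}=\frac12 g^{00}\partial_0 g_{00}=-F^{-1}\partial_0 F=-\partial_\psi\log F$; and $\Gamma^j_{00}=-\frac12 g^{jm}\partial_m g_{00}=F^{-3}g^{jm}\partial_m F$. For $\Gamma^k_{j0}$ only one term of the Christoffel formula survives, $\Gamma^k_{j0}=\frac12 g^{k\ell}\partial_0 g_{j\ell}$, and substituting the evolution identity $\partial_0 g_{j\ell}=2b_{j\ell}/F$ gives $\Gamma^k_{j0}=F^{-1}g^{k\ell}b_{j\ell}=b_j^k/F$. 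A quicker unified route to these last two coefficients is to differentiate $\partial_0\bm r=-\bm n/F$ with respect to $u^j$, invoke Weingarten, and match the $\partial_k\bm r$ and $\bm n$ components.

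For the modified harmonic-coordinate condition \eqref{eq:mod_harmonic_cond} I would contract $\Gamma^0_{\mu\nu}$ with $g^{\mu\nu}$; the cross terms drop since $g^{0i}=0$, leaving $\Gamma^0=g^{00}\Gamma^0_{00}+g^{ij}\Gamma^0_{ij}=F^2\Gamma^0_{00}-Fg^{ij}b_{ij}$, and as $g^{ij}b_{ij}=\Tr\hat W=b_m^m=F\Gamma^m_{m0}$ this is exactly $F^2(\Gamma^0_{00}-\Gamma^m_{m0})$. To evaluate it I would apply the Laplace--Beltrami operator of $g_{\mu\nu}$ to the coordinate function $u^0$: from $\Delta_g f=g^{\mu\nu}(\partial_\mu\partial_\nu f-\Gamma^\lambda_{\mu\nu}\partial_\lambda f)$ with $\partial_\mu\partial_\nu u^0=0$ and $\partial_\lambda u^0=\delta^0_\lambda$ one gets $\Delta_g u^0=-\Gamma^0$; but the chart $(u^0,\dots,u^n)\mapsto\bm r$ pulls the Euclidean metric back to $g_{\mu\nu}$ while $\psi$ pulls back to $u^0$, so $\Delta_g u^0$ is the ordinary Laplacian $\Delta\psi=V$, whence $\Gamma^0=-V$. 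As a self-check, $F^2(\Gamma^0_{00}-\Gamma^m_{m0})=-F\partial_0 F-FnH$ agrees with $-\Delta\psi$ via the classical identity $\Delta\psi=\nabla\cdot(-F\bm n)=F\partial_0 F+FnH$, using $\partial_{\bm n}=-F\partial_0$ and $\nabla\cdot\bm n=-\Tr\hat W$.

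The computation is essentially mechanical, so there is no deep obstacle; what genuinely needs care is the sign bookkeeping forced by the conventions $\bm n=-\nabla\psi/|\nabla\psi|$ and $\partial_0\bm r=-\bm n/F$ --- landing on $\Gamma^0=-V$ rather than $+V$, and correctly reading $g^{ij}b_{ij}=F\Gamma^m_{m0}$ --- together with the order of operations, since the evolution identity $\partial g_{ij}/\partial\psi=2b_{ij}/F$ must be in hand before it can be used to fix $\Gamma^k_{j0}$.
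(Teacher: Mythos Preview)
Your proposal is correct and follows essentially the same route as the paper: the Gauss/Weingarten formulae and the Christoffel formula, together with the block structure $g_{00}=F^{-2}$, $g_{0i}=0$, give all the coefficients in \eqref{eq:Gamma1}--\eqref{eq:Gamma3}, and comparing the two expressions for $\Gamma^0_{ij}$ yields the evolution identity $\partial_\psi g_{ij}=2b_{ij}/F$. The one mild difference is the final step: the paper reaches $\Gamma^0=-V$ by writing the divergence $\nabla\cdot\nabla\psi$ explicitly as $(F/\sqrt g)\partial_\psi(F\sqrt g)$ in the curvilinear frame, whereas you invoke the coordinate identity $\Delta_g u^0=-g^{\mu\nu}\Gamma^0_{\mu\nu}$ directly; these are the same computation, and your packaging is arguably cleaner.
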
 \begin{proof}
 To prove the three identities in Eq.~\ref{eq:Gamma1}, it would suffice to compare the equation $\partial_\mu\partial_\nu\bm
 r=\Gamma_{\mu\nu}^\lambda\partial_\lambda\bm r$ with the
 Gau{ss}  and Weingarten formulae: \begin{align*}\frac{\partial ^{2}{\bm r}}{\partial u^{i}\partial u^{j}} =\Gamma _{ij}^{k}%
\frac{\partial {\bm r}}{\partial u^{k}}-Fb_{ij}\frac{\partial {\bm r}}{%
\partial \varphi }, \qquad
\frac{\partial }{\partial u^{j}}\left( F\frac{\partial {\bm r}}{\partial
\varphi }\right) =b_{j}^{k}\frac{\partial {\bm r}}{\partial u^{k}}.\end{align*}
The two identities in Eq.~\ref{eq:Gamma2}, as well as  Eq.~\ref{eq:Gamma3},  follow from the
Christoffel formula  $\Gamma_{\mu\nu}^\lambda=\frac12g^{\lambda\eta}(\partial_\mu
g_{\eta
\nu}+\partial_\nu g_{\mu\eta}-\partial_\eta g_{\mu\nu})$.

Juxtaposing the two expressions of  $\Gamma_{ij}^0$ in Eqs.~\ref{eq:Gamma1} and \ref{eq:Gamma2}, we can put down\begin{align}\label{eq:4HE}\frac{\partial g_{ij}}{\partial\psi
}=\frac2Fb_{ij},\quad\frac{\partial\log\det( g_{ij})}{\partial \psi
}=g^{ij}\frac{\partial g_{ij}}{\partial \psi
}=\frac2Fg^{ij}b_{ij}=\frac{2nH}{F}=\frac{2\Tr(\hat W)}{F}.\end{align}
On the other hand, the
``inhomogeneous Laplace equation''  $ \Delta\psi(\bm r)=-\nabla\cdot\bm F(\bm r)=V(\bm r) $ prescribes the divergence  of
force field $ \bm F(\bm r)$, \textit{i.e.} $(F/\sqrt{g})\partial_0(F\sqrt{g})
=V$ (hereafter $g=\det(g_{ij}) $), which can be combined with  Eqs.~\ref{eq:Gamma1} and \ref{eq:4HE} into \begin{align*}\Gamma^{0}:=g^{00}\Gamma^0_{00}+g^{ij}\Gamma_{ij}^0=F^{2}(\Gamma^{0}_{00}-\Gamma^{m}_{m0})=-\frac{F}{\sqrt{g}}\frac{\partial}{\partial\psi}(F\sqrt g)=-\nabla\cdot\nabla\psi=-V,\end{align*}as stated in  Eq.~\ref{eq:mod_harmonic_cond}. If $ V(\bm r)\equiv0$, and $\psi(\bm r)$ is a harmonic function, then the formula $ \Gamma^0=g^{\mu\nu}\Gamma_{\mu\nu}^0=0$ hearkens back to the ``harmonic coordinate condition'' in general relativity.
   \qed
\end{proof}\begin{remark}From the identity $\partial_0g_{ij}=2b_{ij}/F $, we can  also readily  deduce $\partial_0 g^{ij}=-2g^{ik}b_k^j/F$.

Another by-product of the foregoing argument  is the following result that will be  used later in this work:\begin{align}\Tr(\hat W)+\frac{\partial F}{\partial\psi}=\frac{V}{F},\quad\text{i.e.~}(\bm n\cdot\nabla)\log F=\Tr(\hat W)-\frac{V}{F}.\label{eq:nH}\end{align}When $V=0$ and $n=2$, the formula above is a standard exercise in electrostatics \cite[Ref.][Exercise~1.11]{Jackson:EM}.\eor\end{remark}

\subsection{Laplace Operator in Curvilinear Coordinates and Evolution of Mean Curvature\label{subsec:Lap_evolv}}The Laplace operator on the Euclidean space $\mathbb R^{n+1} $ can be presented in curvilinear coordinates
as \[\Delta=g^{\mu\nu}(\partial_\mu\partial_\nu-\Gamma_{\mu\nu}^\lambda\partial_\lambda)=\frac1{\sqrt{\smash[b]{\det(g_{\mu\nu})}}}\partial_\lambda\left( g^{\lambda\eta}\sqrt{\smash[b]{\det(g_{\mu\nu})}}\partial_\eta \right).\]
Here, $\det(g_{\mu\nu})=g/F^{2}$. Similarly, one can define the \textit{Laplace operator on the level set $\Sigma$}
as \[\Delta _{\Sigma }=g^{ij}( \partial_i\partial_j-\Gamma_{ij}^k\partial_k)=\frac1{\sqrt{g}}\partial_k\left( g^{k\ell}\sqrt{g}\partial_\ell \right).\]

\begin{proposition}[Decomposition of Laplacian]\label{prop:Lap_decomp}The Laplace operator $ \Delta$ can be rewritten as\begin{align}\Delta =\Delta _{\Sigma }+F^{2}\frac{\partial ^{2}}{\partial\psi ^{2}}-\frac{1%
}{F}g^{jm}\frac{\partial F}{\partial u^{m}}\frac{\partial }{\partial u^{j}}+V\frac{\partial }{\partial
\psi },\label{eq:F_LapOp}\end{align}where\[\Delta_\Sigma:=\frac{1}{\sqrt{g}}\partial_i(g^{ij}\sqrt{g}\partial_j)=g^{ij}(\partial_i\partial_j-\Gamma^k_{ij}\partial_k)\]is the Laplace-Beltrami operator on the level set of $ \psi$. Accordingly, we have the following formula\begin{align}
\Delta \log F=-F\Delta_\Sigma\frac1F+F^2\frac{\partial ^{2}\log F}{\partial\psi ^{2}}+\Gamma^0\Gamma_{00}^0=-F\Delta_\Sigma\frac1F+F^2\frac{\partial ^{2}\log F}{\partial\psi ^{2}}-\frac{
\Tr(\hat W)V}{F}+\frac{V^{2}}{F^{2}}. \label{eq:Lap_lnF_mod}
\end{align}\end{proposition}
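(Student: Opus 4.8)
The plan is to expand the curvilinear expression $\Delta=g^{\mu\nu}(\partial_\mu\partial_\nu-\Gamma^\lambda_{\mu\nu}\partial_\lambda)$ and exploit the block-diagonal structure $g^{0i}=g^{i0}=0$, $g^{00}=F^2$, $\partial_0=\partial/\partial\psi$, so that no mixed $0$--$i$ terms survive:
\[
\Delta=F^2\bigl(\partial_0^2-\Gamma^0_{00}\partial_0-\Gamma^j_{00}\partial_j\bigr)+g^{ij}\bigl(\partial_i\partial_j-\Gamma^k_{ij}\partial_k-\Gamma^0_{ij}\partial_0\bigr).
\]
I would then read off the two blocks from the connection coefficients already computed in Proposition~\ref{prop:conn0}.

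In the normal block I substitute $\Gamma^0_{00}=-\partial_0\log F$ from Eq.~\ref{eq:Gamma3} and $\Gamma^j_{00}=F^{-3}g^{jm}\partial_m F$ from Eq.~\ref{eq:Gamma2}, getting $F^2\partial_0^2+F^2(\partial_0\log F)\partial_0-\tfrac1F g^{jm}(\partial_m F)\partial_j$. In the tangential block, the piece carrying $\Gamma^k_{ij}$ is by definition $\Delta_\Sigma=g^{ij}(\partial_i\partial_j-\Gamma^k_{ij}\partial_k)$, and for the leftover $-g^{ij}\Gamma^0_{ij}\partial_0$ I invoke the modified harmonic coordinate condition Eq.~\ref{eq:mod_harmonic_cond}: since $g^{ij}\Gamma^0_{ij}=\Gamma^0-g^{00}\Gamma^0_{00}=-V+F^2\partial_0\log F$, this term equals $V\partial_0-F^2(\partial_0\log F)\partial_0$. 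Adding the two blocks, the two copies of $F^2(\partial_0\log F)\partial_0$ cancel — this is exactly where the constraint $\Delta\psi=V$ does its work — and Eq.~\ref{eq:F_LapOp} drops out.

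For the companion identity Eq.~\ref{eq:Lap_lnF_mod} I would feed the operator just obtained the function $\log F$. Using $\partial_j\log F=F^{-1}\partial_j F$, the first-order tangential term becomes $-g^{jm}(\partial_j\log F)(\partial_m\log F)$, and I merge it with $\Delta_\Sigma\log F$ through the elementary conjugation formula $\Delta_\Sigma(e^{-h})=e^{-h}\bigl(-\Delta_\Sigma h+g^{ij}\partial_i h\,\partial_j h\bigr)$ with $h=\log F$, i.e.\ $F\,\Delta_\Sigma\tfrac1F=-\Delta_\Sigma\log F+g^{ij}\partial_i\log F\,\partial_j\log F$; this collapses the pair to $-F\Delta_\Sigma\tfrac1F$. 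Since $\Gamma^0=-V$ and $\Gamma^0_{00}=-\partial_0\log F$ give $V\partial_0\log F=\Gamma^0\Gamma^0_{00}$, the first equality in Eq.~\ref{eq:Lap_lnF_mod} follows. The second equality is then a direct substitution: from $\partial_0\bm r=-\bm n/F$ one has $\bm n\cdot\nabla=-F\partial_0$, so Eq.~\ref{eq:nH} reads $\partial_0\log F=-\Tr(\hat W)/F+V/F^2$, whence $\Gamma^0\Gamma^0_{00}=V\partial_0\log F=-\Tr(\hat W)V/F+V^2/F^2$.

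There is no real difficulty here; the one place demanding care is the bookkeeping of the coefficient of $\partial_0=\partial/\partial\psi$. It picks up $+F^2(\partial_0\log F)$ from the normal block (via $\Gamma^0_{00}$) and $-F^2(\partial_0\log F)$ from the tangential block (via $g^{ij}\Gamma^0_{ij}$ rewritten through $\Gamma^0$), and recognizing that these exactly cancel so as to leave the clean coefficient $V$ is the crux of the argument. Everything else is routine differentiation, and the $\log F$ formula is a mechanical consequence once the conjugation identity for $\Delta_\Sigma$ is in hand.
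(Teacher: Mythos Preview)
Your proposal is correct and follows essentially the same route as the paper: the paper also expands $\Delta=g^{\mu\nu}(\partial_\mu\partial_\nu-\Gamma^\lambda_{\mu\nu}\partial_\lambda)$ via the block structure and substitutes the connection coefficients from Proposition~\ref{prop:conn0}, though it bundles $-g^{ij}\Gamma^0_{ij}\partial_\psi-g^{00}\Gamma^0_{00}\partial_\psi$ directly into $-\Gamma^0\partial_\psi=V\partial_\psi$ rather than displaying your explicit cancellation of the two $F^2(\partial_0\log F)\partial_0$ terms. For Eq.~\ref{eq:Lap_lnF_mod} the paper is terser still (it merely says ``spell out $\Gamma^0\Gamma^0_{00}$ using Eq.~\ref{eq:nH}''), so your conjugation identity $F\Delta_\Sigma(1/F)=-\Delta_\Sigma\log F+g^{ij}\partial_i\log F\,\partial_j\log F$ is exactly the step the paper leaves implicit.
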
\begin{proof}By definition, we have\[\Delta =\Delta _{\Sigma }-g^{ij}\Gamma _{ij}^{0}\frac{\partial }{\partial
\psi }+g^{00}\left( \frac{\partial ^{2}}{\partial \psi^{2}}-\Gamma
_{00}^{j}\frac{\partial }{\partial u^{j}}-\Gamma _{00}^{0}\frac{\partial }{%
\partial \psi }\right)=\Delta _{\Sigma }+g^{00}\left( \frac{\partial ^{2}}{\partial \psi^{2}}-\Gamma
_{00}^{j}\frac{\partial }{\partial u^{j}}\right)-\Gamma^{0}\frac{\partial }{\partial
\psi }.\]With the substitution of $g^{00}=F^2$ and the expressions for  $\Gamma _{ij}^0,\Gamma^j_{00},\Gamma^0_{00},\Gamma^0 $ from Proposition~\ref{prop:conn0}, we obtain the claimed result in Eq.~\ref{eq:F_LapOp}. To derive Eq.~\ref{eq:Lap_lnF_mod}, it would suffice to spell out the term $ \Gamma^0\Gamma_{00}^0$ using Eq.~\ref{eq:nH}.\qed\end{proof}\begin{remark}By the relations $ \Delta\bm r=\mathbf 0$ and $ \Delta_\Sigma\bm r=\Tr(\hat W)\bm n$, one may also use Eq.~\ref{eq:F_LapOp} to deduce an explicit expression for the second order derivative $\partial^2\bm r/\partial\psi^2 $. \eor\end{remark}

For a hypersurface embedded in Euclidean space, its Ricci tensor $ (R_{ij})$ and Ricci scalar curvature $ \mathcal R$ can be spelt out ``extrinsically'' in terms of  the Weingarten transformation and the principal curvatures. Concretely speaking,
in the principal curvature coordinate system where the Weingarten transformation $ \hat W=(b^{i}_j)$ is represented by a diagonal matrix with eigenvalues $ k_1,\dots,k_n$ (\textit{viz.}~the $n$ principal curvatures of the hypersurface), one has \begin{align*}(R_{ij})_{1\leq i,j\leq n}=\begin{pmatrix}g_{11}[(k_1+\cdots+k_n)k_1-k_1^2] &  &
\multicolumn{2}{c}{\raisebox{-2.3ex}[0pt]{\Huge0}}
\\
 & \ddots & &\\
\multicolumn{2}{c}{\raisebox{.8ex}[0pt]{\Huge0}}
 &  & g_{nn}[(k_1+\cdots+k_n)k_n-k_n^2]
\end{pmatrix}=(b^{k}_kb_{ij}-b_{ki}g^{km}b_{mj})_{1\leq i,j\leq n}.\end{align*}Here, to identify the two extreme ends of the equation above, we may recall that the geometric relation \[b_{ki}g^{km}b_{mj}-b^{k}_kb_{ij}=-R_{ij}\]applies to  any curvilinear coordinate system on any  $ n$-dimensional hypersurface embedded in $ \mathbb R^{n+1}$, as evident from the  contraction $ R_{ij}=R^k_{ikj}$ of the Riemann curvature tensor $ R^k_{i\ell j}=b_{ij}b^{k}_{\ell}-b_{i\ell}b^{k}_{j}$. Consequently, the Ricci scalar curvature  $ \mathcal R=g^{ij}R_{ij}=b^{i}_ib^j_i-b^i_jb^j_i$ can be explicitly given in terms of moments of principal curvatures $ \mathcal R=(\sum_{\ell=1}^n k_\ell)^2-\sum_{m=1}^n k^2_m=[\Tr(\hat W)]^{2}-\Tr(\hat W^2)$.

In the proposition below, both the surface Laplacian $ \Delta_\Sigma$ and the  Ricci scalar curvature $ \mathcal R$ turn up in the evolution of the mean curvature  along the ``level value coordinate''. \begin{proposition}[Evolution of the Second Fundamental Form]\label{prop:bij_H}We have the following identities
\begin{align}\frac{\partial b_{ij}}{\partial \psi }=( b_{j}^{k}b_{ki}- \partial_i\partial_j+\Gamma_{ij}^k\partial_k) \frac{1}{F}\label{eq:bij_evolv}\end{align}and\begin{align}\frac{\partial
\Tr(\hat W)}{\partial\psi}=-\Delta_\Sigma\frac1F-\frac{
\Tr(\hat W^{2})}{F}=-\Delta_\Sigma\frac1F-\frac{
[\Tr(\hat W)]^{2}-\mathcal R}{F},\label{eq:H_evolv}\end{align}where $\mathcal R$ is the Ricci scalar curvature of the level set in question.\end{proposition}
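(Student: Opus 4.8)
The plan is to prove the second-fundamental-form evolution (Eq.~\ref{eq:bij_evolv}) by differentiating $b_{ij}=\partial_i\partial_j\bm r\cdot\bm n$ along the level-value coordinate $u^0=\psi$, and then to deduce the mean-curvature evolution (Eq.~\ref{eq:H_evolv}) by contracting the result against the inverse metric $g^{ij}$. The key auxiliary object is the vector $\partial_0\bm n$. Since $|\bm n|=1$ it is tangent to the level set, so $\partial_0\bm n=a^m\partial_m\bm r$; differentiating the orthogonality relation $\bm n\cdot\partial_k\bm r=0$ along $u^0$, commuting $\partial_0$ with $\partial_k$, and substituting $\partial_0\bm r=-\bm n/F$ gives $\partial_0\bm n\cdot\partial_k\bm r=\partial_k(1/F)$, hence $a^m=g^{mk}\partial_k(1/F)$. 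Combined with the Gauss formula $\partial_i\partial_j\bm r=\Gamma_{ij}^k\partial_k\bm r+b_{ij}\bm n$, this yields $(\partial_i\partial_j\bm r)\cdot\partial_0\bm n=\Gamma_{ij}^k\partial_k(1/F)$.

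Next I write $\partial_0 b_{ij}=(\partial_0\partial_i\partial_j\bm r)\cdot\bm n+(\partial_i\partial_j\bm r)\cdot\partial_0\bm n$; the second summand is already in hand. For the first, I commute $\partial_0$ past $\partial_i\partial_j$, substitute $\partial_0\bm r=-\bm n/F$, and expand $\partial_i\partial_j(-\bm n/F)$ by the Leibniz rule. Upon taking the inner product with $\bm n$, every term still carrying a bare factor $\partial_a\bm n$ drops (since $\partial_a\bm n\perp\bm n$), leaving $-\partial_i\partial_j(1/F)$ from the $\bm n\,\partial_i\partial_j(1/F)$ term together with $b_j^kb_{ki}/F$ coming from $\partial_i\partial_j\bm n\cdot\bm n=-\partial_i\bm n\cdot\partial_j\bm n=-b_i^kb_{kj}$ (Weingarten formula, plus the symmetry $b_i^kb_{kj}=b_j^kb_{ki}$). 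Adding the two summands reproduces $(b_j^kb_{ki}-\partial_i\partial_j+\Gamma_{ij}^k\partial_k)(1/F)$, which is Eq.~\ref{eq:bij_evolv}. To obtain Eq.~\ref{eq:H_evolv}, differentiate $\Tr(\hat W)=g^{ij}b_{ij}$ to get $\partial_0\Tr(\hat W)=(\partial_0 g^{ij})b_{ij}+g^{ij}\partial_0 b_{ij}$; the first term equals $-2\Tr(\hat W^2)/F$ by the relation $\partial_0 g^{ij}=-2g^{ik}b_k^j/F$ from the Remark following Proposition~\ref{prop:conn0}, while $g^{ij}b_j^kb_{ki}=\Tr(\hat W^2)$ and $g^{ij}(\partial_i\partial_j-\Gamma_{ij}^k\partial_k)=\Delta_\Sigma$ turn the second term into $\Tr(\hat W^2)/F-\Delta_\Sigma(1/F)$. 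Their sum is $-\Delta_\Sigma(1/F)-\Tr(\hat W^2)/F$, and substituting the extrinsic identity $\mathcal R=[\Tr(\hat W)]^2-\Tr(\hat W^2)$ established just above the proposition gives the stated second form of Eq.~\ref{eq:H_evolv}.

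The whole argument is linear algebra together with the Gauss--Weingarten relations, so I anticipate no conceptual hurdle; the one place demanding genuine care is the bookkeeping when expanding $\partial_i\partial_j(-\bm n/F)$ and correctly isolating the $b_j^kb_{ki}/F$ contribution from the normal second derivative $\partial_i\partial_j\bm n\cdot\bm n$, where a stray sign or a mismatched index is easiest to introduce. I would guard against this by independently checking the $g^{ij}$-trace of Eq.~\ref{eq:bij_evolv} against the already-known relations $\partial_0 g_{ij}=2b_{ij}/F$ and $\partial_0\log\det(g_{ij})=2\Tr(\hat W)/F$ recorded in Eq.~\ref{eq:4HE}.
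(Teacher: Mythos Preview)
Your proposal is correct and follows essentially the same approach as the paper: both derive Eq.~\ref{eq:bij_evolv} by commuting $\partial_0$ with the tangential derivatives acting on $\bm r$ and extracting the normal component, and both obtain Eq.~\ref{eq:H_evolv} by contracting against $g^{ij}$ together with $\partial_0 g^{ij}=-2g^{ik}b_k^j/F$. The only cosmetic difference is that the paper packages the first step through the $(n{+}1)$-dimensional connection coefficients of Proposition~\ref{prop:conn0} (reading off the $\lambda=0$ component of $\partial_0\Gamma_{ij}^\lambda+\Gamma_{ij}^\nu\Gamma_{\nu 0}^\lambda=\partial_i\Gamma_{j0}^\lambda+\Gamma_{j0}^\nu\Gamma_{\nu i}^\lambda$), whereas you work directly with $b_{ij}=\partial_i\partial_j\bm r\cdot\bm n$ and the Gauss--Weingarten formulae; the two computations are equivalent.
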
\begin{proof}From the identity $
\partial_0(\partial_i\partial_j\bm
r)=\partial_i(\partial_0\partial_j\bm r)$, we may deduce   $\partial_0 \Gamma _{ij}^{0}+\Gamma _{ij}^{\nu }\Gamma _{\nu
0}^{0}={\partial_i \Gamma _{j0}^{0}}+\Gamma _{j0}^{\nu }\Gamma
_{\nu i}^0$. This results in Eq.~\ref{eq:bij_evolv}, upon substitution of the connection coefficients. Combining $\Tr(\hat W)=g^{ij}b_{ij}$ and  $\partial_0 g^{ij}=-2g^{ik}b_k^j/F$
with Eq.~\ref{eq:bij_evolv}, we obtain\begin{equation*}\frac{\partial
\Tr(\hat W)}{\partial\psi}=-\Delta_\Sigma\frac1F-\frac{b_j^kb_k^j}{F},\quad\text{where  } b_j^kb_k^j=\Tr(\hat W^2)=[\Tr(\hat W)]^{2}-\mathcal R . \end{equation*}This verifies Eq.~\ref{eq:H_evolv}.\qed\end{proof}\begin{remark}We originally discovered the scalar curvature identity mentioned in the introduction by an examination of the  evolution equation for mean curvature (Eq.~\ref{eq:H_evolv}). We leave it to the interested readers to recover such an ``extrinsic proof''. In the next subsection, we will only describe an ``intrinsic approach'' based on the definition of Ricci scalar curvature via the metric and connection on the level set. As will be explained elsewhere, the  intrinsic method has better extendibility when the ambient space is not Euclidean, and/or the co-dimension of the submanifold is higher than one.   \eor\end{remark}
\subsection{Scalar Curvature Identity and Evolution of Ricci Curvature\label{subsec:curv_id_proof}}We are now ready to prove the formula \begin{align*}
\mathcal R(\bm r)=-\Delta\log |\nabla\psi(\bm r)|+\nabla\cdot\left[ \Delta\psi(\bm r)\frac{\mathcal \nabla\psi(\bm r)}{ |\nabla\psi(\bm r)|^{2}} \right]
\end{align*}mentioned in the introduction. \begin{proposition}[Scalar Curvature Identity]Let $ F=|\nabla\psi|$ and $ V=\Delta\psi $, then we have the following geometric identity for the scalar curvature of the level sets for $ \psi$: \begin{align}\label{eq:dlnF_R} \Delta\log F+\mathcal R-\frac{\partial V}{\partial\psi}-\frac{
2\Tr(\hat W)V}{F}+\frac{V^{2}}{F^{2}}=\Delta\log F+\mathcal R-F^{2}\frac{\partial }{\partial\psi}\frac{V}{F^{2}}-\frac{V^{2}}{F^{2}}=\Delta\log F+\mathcal R-\nabla\cdot\left( \frac{V\nabla\psi}{F^{2}} \right)=0.\end{align}\end{proposition}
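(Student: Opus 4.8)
The plan is to read off Eq.~\ref{eq:dlnF_R} from the identities already assembled in Propositions~\ref{prop:conn0}--\ref{prop:bij_H}, so that the proof reduces to collecting terms. I would first dispose of the two trailing equalities in the displayed chain, which are purely algebraic consequences of the ``modulus balance'' relation Eq.~\ref{eq:nH} together with the metric evolution Eq.~\ref{eq:4HE}. Expanding $F^{2}\frac{\partial}{\partial\psi}\frac{V}{F^{2}}=\frac{\partial V}{\partial\psi}-\frac{2V}{F}\frac{\partial F}{\partial\psi}$ and substituting $\frac{\partial F}{\partial\psi}=\frac{V}{F}-\Tr(\hat W)$ from Eq.~\ref{eq:nH} shows that $-F^{2}\frac{\partial}{\partial\psi}\frac{V}{F^{2}}-\frac{V^{2}}{F^{2}}$ coincides with $-\frac{\partial V}{\partial\psi}-\frac{2\Tr(\hat W)V}{F}+\frac{V^{2}}{F^{2}}$; and since $\nabla\psi/F^{2}=\partial_{0}\bm r=\partial\bm r/\partial\psi$ is the tangent to the $\bm F$-line, the flat divergence expressed in the curvilinear frame is $\nabla\cdot(V\nabla\psi/F^{2})=\frac{F}{\sqrt{g}}\frac{\partial}{\partial\psi}\bigl(\frac{\sqrt{g}\,V}{F}\bigr)$, which---upon using $\frac{1}{2}\frac{\partial}{\partial\psi}\log g=\Tr(\hat W)/F$ from Eq.~\ref{eq:4HE} and then Eq.~\ref{eq:nH}---reduces to $\frac{\partial V}{\partial\psi}+\frac{2\Tr(\hat W)V}{F}-\frac{V^{2}}{F^{2}}$. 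Thus all three members of Eq.~\ref{eq:dlnF_R} are equivalent, and it suffices to prove the single identity $\Delta\log F+\mathcal R=\frac{\partial V}{\partial\psi}+\frac{2\Tr(\hat W)V}{F}-\frac{V^{2}}{F^{2}}$.

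For this core identity I would solve the mean-curvature evolution Eq.~\ref{eq:H_evolv} for the scalar curvature, $\mathcal R=[\Tr(\hat W)]^{2}+F\frac{\partial\Tr(\hat W)}{\partial\psi}+F\Delta_{\Sigma}\frac{1}{F}$, and add it to the Laplacian decomposition Eq.~\ref{eq:Lap_lnF_mod}, $\Delta\log F=-F\Delta_{\Sigma}\frac{1}{F}+F^{2}\frac{\partial^{2}\log F}{\partial\psi^{2}}-\frac{\Tr(\hat W)V}{F}+\frac{V^{2}}{F^{2}}$. The two surface-Laplacian terms $\pm F\Delta_{\Sigma}\frac{1}{F}$ cancel at once, leaving an expression involving only $\Tr(\hat W)$, $V$, $F$ and $\psi$-derivatives of $F$. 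I would then eliminate $\Tr(\hat W)$ throughout by Eq.~\ref{eq:nH}, writing $\Tr(\hat W)=\frac{V}{F}-F\frac{\partial\log F}{\partial\psi}$: the combination $F\frac{\partial\Tr(\hat W)}{\partial\psi}+F^{2}\frac{\partial^{2}\log F}{\partial\psi^{2}}$ simplifies to $F\frac{\partial}{\partial\psi}\frac{V}{F}-(\partial F/\partial\psi)^{2}$, while $[\Tr(\hat W)]^{2}$ and $\frac{\Tr(\hat W)V}{F}$ supply the matching $(\partial F/\partial\psi)^{2}$ and the cross terms. The second-order pieces $F\partial^{2}F/\partial\psi^{2}$ and the squares $(\partial F/\partial\psi)^{2}$ then cancel, and since $F\frac{\partial}{\partial\psi}\frac{V}{F}=\frac{\partial V}{\partial\psi}-\frac{V}{F}\frac{\partial F}{\partial\psi}$ the remainder collapses to $\Delta\log F+\mathcal R=\frac{\partial V}{\partial\psi}-\frac{2V}{F}\frac{\partial F}{\partial\psi}+\frac{V^{2}}{F^{2}}$, which equals $\frac{\partial V}{\partial\psi}+\frac{2\Tr(\hat W)V}{F}-\frac{V^{2}}{F^{2}}$ after one last use of Eq.~\ref{eq:nH}. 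This establishes Eq.~\ref{eq:dlnF_R}.

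The step I expect to be the main obstacle is not analytic: the one genuinely nontrivial input, the Gauss relation $\Tr(\hat W^{2})=[\Tr(\hat W)]^{2}-\mathcal R$ for a hypersurface in flat space, has already been absorbed into Eq.~\ref{eq:H_evolv}. What requires care is the sign and orientation bookkeeping---the interplay between $\partial/\partial\psi$ (which by $\partial_{0}\bm r=-\bm n/F=\nabla\psi/F^{2}$ is the flat directional derivative along the $\bm F$-line, times $-1/F$), the quantities $\partial F/\partial\psi$ and $\Tr(\hat W)$ tied together by Eq.~\ref{eq:nH}, and the Jacobian factor $\det(g_{\mu\nu})=g/F^{2}$ in the curvilinear divergence---since it is precisely the consistent use of these that forces every extraneous term to drop out. (The route above is the ``extrinsic'' proof the paper explicitly invites the reader to reconstruct; I would expect the text itself to present the announced ``intrinsic'' proof, computing $\mathcal R=g^{ij}R_{ij}$ directly from the $(n+1)$-dimensional connection coefficients of Proposition~\ref{prop:conn0} and the vanishing of the ambient Riemann tensor, whose main labour would be disentangling the level-set Ricci tensor from the full Christoffel symbols.)
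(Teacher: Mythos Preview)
Your argument is correct: the two trailing equalities follow from Eqs.~\ref{eq:nH} and \ref{eq:4HE} exactly as you describe, and the core identity is obtained cleanly by adding $\mathcal R$ (extracted from Eq.~\ref{eq:H_evolv}) to $\Delta\log F$ (from Eq.~\ref{eq:Lap_lnF_mod}), with the surface-Laplacian terms cancelling and the remaining bookkeeping closed by Eq.~\ref{eq:nH}. You have also correctly anticipated that this is \emph{not} the route the paper takes.

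The paper's proof is the announced ``intrinsic'' one: it begins from the flatness condition $g^{\mu\nu}\underline R_{\mu\nu}=0$ for the ambient $(n+1)$-dimensional metric, expands this in the Christoffel symbols of Proposition~\ref{prop:conn0}, and after repeated use of the modified harmonic-coordinate condition $\Gamma^{0}=-V$ and the identity $\partial_{0}\Gamma^{0}_{ij}+\Gamma^{\nu}_{ij}\Gamma^{0}_{\nu 0}=\partial_{i}\Gamma^{0}_{j0}+\Gamma^{\nu}_{j0}\Gamma^{0}_{\nu i}$ separates out $\mathcal R$ from the remaining connection terms, eventually recognising $-F\Delta_{\Sigma}(1/F)+F^{2}\partial_{\psi}^{2}\log F$ via Eq.~\ref{eq:Lap_lnF_mod}. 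Your extrinsic route is shorter and more transparent here, because Proposition~\ref{prop:bij_H} has already absorbed the Gauss relation and the heavy Christoffel algebra; the paper's intrinsic route buys generality (as the Remark after Proposition~\ref{prop:bij_H} notes, it extends more readily to non-Euclidean ambients and higher codimension) at the cost of a longer symbol chase.
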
\begin{proof}We define the Ricci tensor for the local curvilinear coordinates $(u^0=\psi,u^1,\dots,u^n) $ by \[\underline R_{\mu\nu}=\partial_\lambda\Gamma^\lambda_{\mu\nu}-\partial_\nu\Gamma^\lambda_{\mu\lambda}+\Gamma^\lambda_{\mu\nu}\Gamma^\tau_{\lambda
\tau}-\Gamma^\tau_{\mu\lambda}\Gamma^\lambda_{\nu\tau},\] so that  $ g^{\mu\nu}\underline{R}_{\mu\nu}=0$ follows from the flatness of the Euclidean space $ \mathbb R^{n+1}$. Meanwhile, by the direct sum decomposition of the metric $ g_{\mu\nu}$, the following identity holds\begin{align}0=g^{\mu\nu}\underline{R}_{\mu\nu}=F^2( \partial_\lambda\Gamma^\lambda_{00}-\partial_0\Gamma^\lambda_{0\lambda}+\Gamma^\lambda_{00}\Gamma^\tau_{\lambda
\tau}-\Gamma^\tau_{0\lambda}\Gamma^\lambda_{0\tau} )+\mathcal R+g^{ij}( \partial_0\Gamma^0_{ij}-\partial_j\Gamma^0_{i0}+\Gamma^\lambda_{ij}\Gamma^0_{\lambda
0}+\Gamma^0_{ij}\Gamma^m_{0m}-\Gamma^0_{i\lambda}\Gamma^\lambda_{j0} -\Gamma^m_{i0}\Gamma^0_{jm}).\label{eq:F_flat}\end{align}Using the modified version of the harmonic coordinate condition $ \Gamma^0:=g^{\mu\nu}\Gamma^0_{\mu\nu}=-V$ (Eq.~\ref{eq:mod_harmonic_cond}), we may derive\[F^2\Gamma^\lambda_{00}\Gamma^\tau_{\lambda
\tau}+g^{ij}( \Gamma^\lambda_{ij}\Gamma^0_{\lambda
0}+\Gamma^0_{ij}\Gamma^m_{0m} )=F^2\Gamma^m_{00}\Gamma^\tau_{m
\tau}+g^{ij}\Gamma^m_{ij}\Gamma^0_{m
0}+\Gamma^0(\Gamma^0_{00}+\Gamma^m_{0m} )=F^2\Gamma^m_{00}\Gamma^\tau_{m
\tau}+g^{ij}\Gamma^m_{ij}\Gamma^0_{m
0}-V(\Gamma^0_{00}+\Gamma^m_{0m} ).\]
Combining this result with the identity    $\partial_0 \Gamma _{ij}^{0}+\Gamma _{ij}^{\nu }\Gamma _{\nu
0}^{0}={\partial_i \Gamma _{j0}^{0}}+\Gamma _{j0}^{\nu }\Gamma
_{\nu i}^0$, we may rewrite Eq.~\ref{eq:F_flat} as{\allowdisplaybreaks[3]\begin{align*}0={}&F^2( \partial_m\Gamma^m_{00}-\partial_0\Gamma^m_{0m}+\Gamma^m_{00}\Gamma^\tau_{m
\tau}-\Gamma^\tau_{0\lambda}\Gamma^\lambda_{0\tau} )+\mathcal R+g^{ij}( \Gamma _{i0}^{\nu }\Gamma
_{j\nu }^0-\Gamma _{ij}^{\nu }\Gamma _{\nu
0}^{0}+\Gamma^m_{ij}\Gamma^0_{m
0}-\Gamma^0_{i\lambda}\Gamma^\lambda_{j0} -\Gamma^m_{i0}\Gamma^0_{jm})-V(\Gamma^0_{00}+\Gamma^m_{0m} )\notag\\={}&F^{2}( \partial_m\Gamma^m_{00}-\partial_0\Gamma^m_{0m}+\Gamma^m_{00}\Gamma^\tau_{m
\tau}-\Gamma^\tau_{0\lambda}\Gamma^\lambda_{0\tau} )+\mathcal R-g^{ij}( \Gamma _{ij}^{0 }\Gamma _{0
0}^{0}+\Gamma^0_{im}\Gamma^m_{j0})-V(\Gamma^0_{00}+\Gamma^m_{0m} )\notag\\={}&F^2( \partial_m\Gamma^m_{00}-\partial_0\Gamma^m_{0m}+\Gamma^m_{00}\Gamma^\tau_{m
\tau}-\Gamma^m_{0\lambda}\Gamma^\lambda_{0m} -\Gamma^0_{0m}\Gamma^m_{00})+\mathcal R-g^{ij}\Gamma^0_{im}\Gamma^m_{j0}-V\Gamma^m_{0m} \notag\\={}&F^{2}\left[\frac{\partial^{2}}{\partial\psi^{2}}\log F-\partial_0(\Gamma^m_{0m}-\Gamma^{0}_{00})\right]+F^2( \partial_m\Gamma^m_{00}+\Gamma^m_{00}\Gamma^\ell_{m
\ell} -\Gamma^0_{0m}\Gamma^m_{00})+\mathcal R-V\Gamma^m_{0m}\notag\\={}&F^{2}\left[\frac{\partial^{2}}{\partial\psi^{2}}\log F-\frac{\partial}{\partial\psi}\frac{V}{F^{2}}\right]+F^2( \partial_m\Gamma^m_{00}+\Gamma^m_{00}\Gamma^\ell_{m
\ell} -\Gamma^0_{0m}\Gamma^m_{00})+\mathcal R-V\Gamma^m_{0m} ,\end{align*}}where we have employed the relation $F^2\Gamma^m_{0\ell}\Gamma^\ell_{0m}+g^{ij}\Gamma^0_{im}\Gamma^m_{j0}=0 $ in the penultimate step. Exploiting the identities itemized in Proposition~\ref{prop:conn0}, along with the relation $ \Gamma^\ell_{m\ell}=\partial_m\log\sqrt g$, we may  reduce Eq.~\ref{eq:F_flat} into\begin{align*}0={}&F^{2}\frac{\partial^{2}}{\partial\psi^{2}}\log F-\frac{\partial V}{\partial\psi}+F^2\left[ \frac{\partial }{\partial u^{m}} \left( -\frac{1}{F} g^{jm}\frac{\partial }{\partial u^{j}}\frac{1}{F} \right)+\frac{1}{F^{3}}g^{jm}\Gamma^\ell_{m
\ell}\frac{\partial F}{\partial u^{j}} +\frac{1}{F^{2}}g^{\ell m}\frac{\partial\log F}{\partial u^{\ell}}\frac{\partial\log F}{\partial u^{m}}\right]-V(\Gamma^m_{0m} +2\Gamma^{0}_{00})+\mathcal R\notag\\={}&F^{2}\frac{\partial^{2}}{\partial\psi^{2}}\log F-\frac{\partial V}{\partial\psi}+F^2\left[ -\frac{1}{F}\frac{\partial }{\partial u^{m}} \left(  g^{jm}\frac{\partial }{\partial u^{j}}\frac{1}{F} \right)-\frac{1}{F}\frac{g^{jm}}{\sqrt{g}}\frac{\partial \sqrt{g} }{\partial u^{m}}\frac{\partial }{\partial u^{j}} \frac{1}{F}\right]-V(\Gamma^m_{0m} +2\Gamma^{0}_{00}) +\mathcal R\notag\\={}&-F\Delta_\Sigma\frac1F+F^2\frac{\partial^2}{\partial\psi^2}\log
F-\frac{\partial V}{\partial\psi}-V(\Gamma^m_{0m} +2\Gamma^{0}_{00}) +\mathcal R\overset{\text{Eq.  }\ref{eq:Lap_lnF_mod}}{=\!\!=\!\!=\!\!=\!\!=}\Delta\log F-\frac{\partial V}{\partial\psi}-V(\Gamma^m_{0m} +\Gamma^{0}_{00})+\mathcal R,\end{align*}which completes the proof.\qed\end{proof}\begin{remark}In Eq.~\ref{eq:dlnF_R}, the expression $ \partial V/\partial\psi$ is understood as the derivative with respect to the curvilinear coordinate $\psi$, thus is well-defined for any continuously differentiable inputs $V\in C^1(\mathfrak D;\mathbb R)$.\eor
\end{remark}
\begin{proposition}[Evolution of Ricci Curvature]\label{prop:Ric_Max}We have the following formulae valid in arbitrary dimensions:\begin{align}\frac{ \partial R_{ij}}{\partial\psi}={}&-b_{ij}\Delta_\Sigma\frac1F+\frac{\mathcal Rb_{ij}-b^{\ell}_\ell R_{ij}}{F}+b^{\ell}_\ell\left(  -\partial_{i}\partial_{j}\frac{1}{F}+\Gamma_{ij}^{k}\partial_{k}\frac1F\right)+b_{j}^{k}\left(  \partial_{i}\partial_{k}\frac{1}{F}-\Gamma_{ik}^{\ell}\partial_{\ell}\frac1F\right) +b_{i}^{k}\left(  \partial_{j}\partial_{k}\frac{1}{F}-\Gamma_{jk}^{\ell}\partial_{\ell}\frac1F\right) \label{eq:dRij}; \\\frac{\partial\mathcal R}{\partial\psi}={}&-\frac{2b_k^jR_{j}^k}{F}-2b^{\ell}_\ell\Delta_\Sigma\frac1F+2g^{ij}b_{j}^{k}\left(  \partial_{i}\partial_{k}\frac{1}{F}-\Gamma_{ik}^{\ell}\partial_{\ell}\frac1F\right)\label{eq:dR_phi}\\={}&-\frac{2G^{ij}b_{ij}}{F}-\frac{b^{\ell}_\ell\mathcal R}{F}-\frac{2}{\sqrt{g}}\partial_i\left( \beta^{ik}\sqrt g\partial_k\frac1F \right),\tag{\ref{eq:dR_phi}*}\label{eq:dR_phi1}\end{align}where $R^k_j:=g^{ik}R_{ij},\beta^{ik}:=b^\ell_\ell g^{ik}-g^{ij}b^k_j$ and $ G^{ij}=R^{ij}-\frac12g^{ij}\mathcal R$ is the (contravariant) Einstein tensor with $ R^{ij}=g^{ik}R_k^j$.
\end{proposition}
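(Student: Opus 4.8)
I would argue ``extrinsically'', starting from the Gauss relation $R_{ij}=b^{\ell}_\ell b_{ij}-b_{ik}g^{km}b_{mj}$ displayed just before Proposition~\ref{prop:bij_H} and differentiating it along the level-value coordinate, that is, applying $\partial_0=\partial/\partial\psi$. Everything is driven by three derivatives already in hand: the mean-curvature evolution $\partial_0 b^{\ell}_\ell=\partial_0\Tr(\hat W)=-\Delta_\Sigma\frac1F-\frac{[\Tr(\hat W)]^{2}-\mathcal R}{F}$ (Eq.~\ref{eq:H_evolv}); the evolution of the second fundamental form $\partial_0 b_{ij}=\frac{b^{k}_j b_{ki}}{F}-(\partial_i\partial_j-\Gamma^k_{ij}\partial_k)\frac1F$ (Eq.~\ref{eq:bij_evolv}); and the companion identity $\partial_0 g^{ij}=-\frac{2}{F}g^{ik}b^{j}_k$ recorded in the Remark after Proposition~\ref{prop:conn0}.

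For Eq.~\ref{eq:dRij} the plan is to expand $\partial_0(b^{\ell}_\ell b_{ij}-b_{ik}g^{km}b_{mj})$ by the product rule, substitute the three derivatives above, and collect the output by how many factors of the Weingarten data it carries. Two families of cubic-in-$b$ terms appear. Those produced by $\partial_0$ acting on the quadratic block $b_{ik}g^{km}b_{mj}$ --- one from the multiplicative term $b^{k}_j b_{ki}/F$ of Eq.~\ref{eq:bij_evolv} applied to each of $b_{ik}$ and $b_{mj}$, and one from $\partial_0 g^{km}=-2g^{kn}b^{m}_n/F$ --- enter with relative weights $-1,-1,+2$ and cancel outright. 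The remaining cubics, $-[\Tr(\hat W)]^{2}b_{ij}/F$ from $\partial_0 b^{\ell}_\ell$ and $+b^{\ell}_\ell b^{k}_j b_{ki}/F$ from $b^{\ell}_\ell\partial_0 b_{ij}$, recombine through the Gauss relation in the guise $[\Tr(\hat W)]^{2}b_{ij}-\Tr(\hat W)\,b_{ik}g^{km}b_{mj}=\Tr(\hat W)\,R_{ij}$ into $-b^{\ell}_\ell R_{ij}/F$, which together with the leftover $\mathcal R\,b_{ij}/F$ supplies the middle term of Eq.~\ref{eq:dRij}. The three surface-Hessian blocks $(\partial_i\partial_j-\Gamma^k_{ij}\partial_k)\frac1F$ pass through unchanged with coefficients $b^{\ell}_\ell$, $b^{k}_j$, $b^{k}_i$, while the lone $\Delta_\Sigma\frac1F$ buried inside $\partial_0 b^{\ell}_\ell$ furnishes $-b_{ij}\Delta_\Sigma\frac1F$.

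For Eq.~\ref{eq:dR_phi} I would trace Eq.~\ref{eq:dRij} against $g^{ij}$ and add the term $(\partial_0 g^{ij})R_{ij}=-\frac{2}{F}g^{ik}b^{j}_k R_{ij}=-\frac{2}{F}b^{j}_k R^{k}_j$ arising from $\mathcal R=g^{ij}R_{ij}$. In the contraction, $\frac1F(\mathcal R\,b_{ij}-b^{\ell}_\ell R_{ij})$ vanishes because $g^{ij}b_{ij}=b^{\ell}_\ell$; the $b_{ij}$-weighted Laplacian and the $b^{\ell}_\ell$-weighted Hessian each collapse to $-b^{\ell}_\ell\Delta_\Sigma\frac1F$; and the last two Hessian blocks of Eq.~\ref{eq:dRij} become equal after the relabeling $i\leftrightarrow j$, producing the factor $2$. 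To upgrade this to the divergence form Eq.~\ref{eq:dR_phi1} I would (i) use $G^{ij}=R^{ij}-\frac12 g^{ij}\mathcal R$ to write $b^{j}_k R^{k}_j=G^{ij}b_{ij}+\frac12 b^{\ell}_\ell\mathcal R$, yielding the first two terms of Eq.~\ref{eq:dR_phi1}; and (ii) rewrite $-2b^{\ell}_\ell\Delta_\Sigma\frac1F+2g^{ij}b^{k}_j(\partial_i\partial_k-\Gamma^{\ell}_{ik}\partial_\ell)\frac1F=-2\beta^{ik}(\partial_i\partial_k-\Gamma^{\ell}_{ik}\partial_\ell)\frac1F$ with $\beta^{ik}=b^{\ell}_\ell g^{ik}-g^{ij}b^{k}_j$, then fold it into $-\frac{2}{\sqrt g}\partial_i(\beta^{ik}\sqrt g\,\partial_k\frac1F)$ via $\frac1{\sqrt g}\partial_i(\beta^{ik}\sqrt g\,\partial_k\frac1F)=(\nabla_i\beta^{ik})\partial_k\frac1F+\beta^{ik}(\partial_i\partial_k-\Gamma^{\ell}_{ik}\partial_\ell)\frac1F$ together with $\nabla_i\beta^{ik}=0$, where $\nabla_i$ denotes the Levi-Civita connection of the level set.

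The identity $\nabla_i\beta^{ik}=0$ is exactly the contracted Codazzi equation $\nabla_i b^{i}_j=\nabla_j\Tr(\hat W)$ for hypersurfaces of Euclidean space; since it is not among the relations recorded earlier, I would include a one-line derivation from the total symmetry of $\partial_i\partial_j\partial_k\bm r$ in the flat ambient space (equivalently, $\partial_\ell b_{ij}$ is totally symmetric modulo connection coefficients). The main obstacle, as is usual for such curvature identities, is the middle step: tracking the several cubic-in-$b$ pieces and their contractions so that one family cancels while the other collapses via a second appeal to the Gauss relation, without sign errors --- everything afterwards is a contraction or an algebraic regrouping.
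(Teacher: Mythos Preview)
Your proposal is correct and follows essentially the same route as the paper: differentiate the Gauss relation $R_{ij}=b^{\ell}_\ell b_{ij}-b_{ik}g^{km}b_{mj}$ using Eqs.~\ref{eq:bij_evolv}, \ref{eq:H_evolv} and $\partial_0 g^{ij}=-2g^{ik}b^{j}_k/F$, let the cubic-in-$b$ pieces cancel or recombine via a second use of Gauss, then contract to reach Eq.~\ref{eq:dR_phi} and pass to divergence form via $(\beta^{ik})_{;k}=0$ from Codazzi--Mainardi. The only cosmetic differences are that the paper writes out the intermediate lines explicitly (rather than describing the $-1,-1,+2$ weights) and uses semicolon notation for the covariant derivative where you write $\nabla_i$.
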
\begin{proof}Using Eqs.~\ref{eq:bij_evolv} and \ref{eq:H_evolv} from Proposition~\ref{prop:bij_H},
we may differentiate both sides of the identity $R_{ij}=b^{k}_kb_{ij}-b_{ki}g^{km}b_{mj} $ to obtain{\allowdisplaybreaks[3]\begin{align*}\frac{ \partial R_{ij}}{\partial\psi}={}&\left(-\Delta_\Sigma\frac1F-\frac{b^i_jb^j_i}{F}\right)b_{ij}+b^{\ell}_\ell\left( \frac{b_{ki}g^{km}b_{mj}}{F} -\partial_{i}\partial_{j}\frac{1}{F}+\Gamma_{ij}^{k}\partial_{k}\frac1F\right)\notag\\&+\frac{2b_{ki}g^{k\ell}b^m_{\ell}b_{mj}  }{F}-\left( \frac{b_{\ell i}g^{\ell p}b_{pk}}{F} -\partial_{i}\partial_{k}\frac{1}{F}+\Gamma_{ik}^{\ell}\partial_{\ell}\frac1F\right) g^{km}b_{mj}-\left( \frac{b_{\ell j}g^{\ell p}b_{pk}}{F} -\partial_{j}\partial_{k}\frac{1}{F}+\Gamma_{jk}^{\ell}\partial_{\ell}\frac1F\right) g^{km}b_{mi}\notag\\={}&-b_{ij}\Delta_\Sigma\frac1F+\frac{\mathcal R-b^{\ell}_\ell b^k_k}{F}b_{ij}+b^{\ell}_\ell\left( \frac{b^{k}_kb_{ij}-R_{ij}}{F} -\partial_{i}\partial_{j}\frac{1}{F}+\Gamma_{ij}^{k}\partial_{k}\frac1F\right)\notag\\&+\frac{2b_{ki}g^{k\ell}(b^{m}_{m}b_{\ell j}-R_{\ell j})  }{F}-\left( \frac{b^{m}_{m}b_{ik}-R_{ik}}{F} -\partial_{i}\partial_{k}\frac{1}{F}+\Gamma_{ik}^{\ell}\partial_{\ell}\frac1F\right) b_{j}^k-\left( \frac{b^{m}_{m}b_{jk}-R_{jk}}{F} -\partial_{j}\partial_{k}\frac{1}{F}+\Gamma_{jk}^{\ell}\partial_{\ell}\frac1F\right) b^{k}_i\notag\\={}&-b_{ij}\Delta_\Sigma\frac1F+\frac{\mathcal Rb_{ij}-b^{\ell}_\ell R_{ij}}{F}+b^{\ell}_\ell\left(  -\partial_{i}\partial_{j}\frac{1}{F}+\Gamma_{ij}^{k}\partial_{k}\frac1F\right)+b_{j}^{k}\left(  \partial_{i}\partial_{k}\frac{1}{F}-\Gamma_{ik}^{\ell}\partial_{\ell}\frac1F\right) +b_{i}^{k}\left(  \partial_{j}\partial_{k}\frac{1}{F}-\Gamma_{jk}^{\ell}\partial_{\ell}\frac1F\right) ,\end{align*}}thus proving Eq.~\ref{eq:dRij}. (Here, we have used the relation $ b^{m}_{i}b_{mj}=b_{ki}g^{km}b_{mj}=b^{k}_{j}b_{ki}$, which also implies the symmetry $b^{\ell }_iR_{\ell j}=b^{\ell}_jR_{\ell i} $.)

Carrying this further, we have\begin{align*}\frac{\partial\mathcal R}{\partial\psi}=-\frac{2g^{ik}b_k^jR_{ij}}{F}+g^{ij}\frac{ \partial R_{ij}}{\partial\psi}=-\frac{2b_k^jR_{j}^k}{F}-2b^{\ell}_\ell\Delta_\Sigma\frac1F+2g^{ij}b_{j}^{k}\left(  \partial_{i}\partial_{k}\frac{1}{F}-\Gamma_{ik}^{\ell}\partial_{\ell}\frac1F\right),\end{align*}as claimed in Eq.~\ref{eq:dR_phi}. Using covariant derivatives indicated by semicolons, we may rewrite the right-hand side of  Eq.~\ref{eq:dR_phi} as\[-\frac{2b_k^jR_{j}^k}{F}-2\beta^{ik}\left( \frac1F\right)_{\raisebox{4pt}{$_{;ik}$}}=-\frac{2b_k^jR_{j}^k}{F}-2\Biggl(\beta^{ik}\partial_{i}\frac1F\Biggr)_{\raisebox{4.2pt}{$_{;k}$}}+2(\beta^{ik})_{;k}\partial_{i}\frac1F=-\frac{2b_k^jR_{j}^k}{F}-\frac{2}{\sqrt{g}}\partial_i\left( \beta^{ik}\sqrt g\partial_k\frac1F \right)+2(\beta^{ik})_{;k}\partial_{i}\frac1F,\]but\begin{align}\label{eq:beta_div_free}(\beta^{ik})_{;k}=b^\ell_{\ell;k} g^{ik}-g^{ij}b^k_{j;k}=b^\ell_{\ell;k} g^{ik}-g^{ij}b^k_{k;j}=0.\end{align}This leads to a succinct proof of Eq.~\ref{eq:dR_phi1}, based on the fact that covariant derivatives of the metric vanishes $ (g^{ik})_{;\ell}:=\partial_\ell g^{ik}+g^{im}\Gamma^k_{m\ell}+g^{km}\Gamma^i_{m\ell}=0$, and the symmetry $ (\partial_i b^k_j+\Gamma^k_{i\ell}b^\ell_j-\Gamma^\ell_{ij}b^k_\ell=:)b^k_{j;i}=b^k_{i;j}$ which follows from the  Codazzi-Mainardi equation $\partial_i b^k_j+\Gamma^k_{i\ell}b^\ell_j=\partial_j b^k_i+\Gamma^k_{j\ell}b^\ell_i $.  \qed\end{proof}

\subsection{Diffeomorphic Invariance of the Curvature Formulae\label{subsec:diff_inv}}
Locally, a level set of $\psi$ can be identified with a level set of $f\circ\psi $, where the smooth function $f:\mathbb R\longrightarrow\mathbb R$ has non-vanishing derivative $ f'\neq0$. It is thus natural to ask if the formulae for mean and scalar curvatures (Eqs.~\ref{eq:nH} and~\ref{eq:dlnF_R}) would remain intact under such function compositions. This question is answered in the positive by the following proposition.\begin{proposition}[Diffeomorphic Invariance] For every $f\in C^3(\mathbb R;\mathbb R)$ satisfying $ f'\neq0$, Eq.~\ref{eq:nH} is dif\-feo\-mor\-phi\-cal\-ly invariant in the sense  of\begin{align}\frac{\D f/\D\psi}{|\D f/\D\psi|}\Tr(\hat W)+\frac{\partial |\nabla (f\circ\psi)|}{\partial(f\circ\psi)}=\frac{ \Delta(f\circ\psi)}{ |\nabla (f\circ\psi)|}\label{eq:2H_F_f}\end{align}where $ \Tr(\hat W)$ is evaluated with respect to the normal $ \bm n=-\nabla\psi/|\nabla\psi|$, and the dif\-feo\-mor\-phic invariance of Eq.~\ref{eq:dlnF_R} is manifested in the following way:\begin{align}
\Delta\log  |\nabla (f\circ\psi)|+\mathcal R-\nabla\cdot\left[\Delta(f\circ\psi)\frac{\nabla(f\circ\psi)}{ |\nabla (f\circ\psi)|^{2}} \right]=0.\label{eq:dlnF_R_f}
\end{align} \end{proposition}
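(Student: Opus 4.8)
The plan is to obtain both identities by applying the mean-curvature relation Eq.~\ref{eq:nH} and the scalar curvature identity Eq.~\ref{eq:dlnF_R} \emph{verbatim} to the composite $\tilde\psi:=f\circ\psi$, which is a legitimate input for all of the preceding results. Since $f\in C^3(\mathbb R;\mathbb R)$ and $\psi\in C^3(\mathfrak D;\mathbb R)$, the chain rule gives $\tilde\psi\in C^3(\mathfrak D;\mathbb R)$ with $\nabla\tilde\psi=f'(\psi)\nabla\psi$, so the hypothesis $f'\neq0$ secures the non-vanishing gradient $\nabla\tilde\psi\neq\mathbf0$ demanded by Proposition~\ref{prop:conn0}; moreover $\tilde\psi$ solves an inhomogeneous Laplace equation $\Delta\tilde\psi=\tilde V$ with $\tilde V=\Delta(f\circ\psi)=f''(\psi)|\nabla\psi|^{2}+f'(\psi)\Delta\psi$. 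Hence Propositions~\ref{prop:conn0}--\ref{prop:Ric_Max}, and in particular Eqs.~\ref{eq:nH} and~\ref{eq:dlnF_R}, all hold with $\psi$ replaced by $\tilde\psi$, and it remains only to translate the right-hand sides back in terms of $\psi$ and $f$.

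Next I would assemble the elementary dictionary between the two setups. With $F=|\nabla\psi|$ as before, one has $\tilde F:=|\nabla\tilde\psi|=|f'(\psi)|F$; the level-value coordinate becomes $\tilde u^{0}=f\circ\psi$, so the curvilinear derivative along the field lines is $\partial/\partial\tilde u^{0}=\partial/\partial(f\circ\psi)=f'(\psi)^{-1}\partial/\partial\psi$ when acting on functions; and the integral curves of $\tilde{\bm F}=-\nabla\tilde\psi=f'(\psi)\bm F$ coincide, as point sets, with the $\bm F$-lines while the level sets of $\tilde\psi$ are precisely those of $\psi$. One may therefore retain the same surface coordinates $u^{1},\dots,u^{n}$, so that the induced metric $(g_{ij})$ — and with it the intrinsic Ricci scalar curvature of the level set — is unchanged: $\tilde{\mathcal R}=\mathcal R$. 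The unit normal, however, is reoriented, $\tilde{\bm n}=-\nabla\tilde\psi/|\nabla\tilde\psi|=\sgn(f'(\psi))\bm n$ with $\sgn(f'(\psi))=(\D f/\D\psi)/|\D f/\D\psi|$; since $b_{ij}=\partial_i\partial_j\bm r\cdot\bm n$ is linear in the normal while $g^{ij}$ is unaffected, the Weingarten trace flips accordingly, $\widetilde{\Tr(\hat W)}=\sgn(f'(\psi))\Tr(\hat W)$, whereas $\mathcal R=[\Tr(\hat W)]^{2}-\Tr(\hat W^{2})$ is even in the second fundamental form, reconfirming $\tilde{\mathcal R}=\mathcal R$.

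Granted this dictionary, the two assertions become mere transcriptions: substituting $\tilde\psi,\tilde F,\tilde V$ and $\widetilde{\Tr(\hat W)}$ into Eq.~\ref{eq:nH} produces exactly Eq.~\ref{eq:2H_F_f} (the prefactor $\sgn(f'(\psi))$ there being the stated ratio $(\D f/\D\psi)/|\D f/\D\psi|$), while substituting the same data into the last form of Eq.~\ref{eq:dlnF_R} and using $\tilde{\mathcal R}=\mathcal R$ produces exactly Eq.~\ref{eq:dlnF_R_f}.

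I do not anticipate a genuine analytic obstacle; the only real work is careful bookkeeping, and the one subtle point is the normal orientation. It is precisely because $\Tr(\hat W)$ is \emph{odd} under $\bm n\mapsto-\bm n$ that the sign $\sgn(f')$ must be retained in Eq.~\ref{eq:2H_F_f}, while no such factor appears in Eq.~\ref{eq:dlnF_R_f} because $\mathcal R$ is \emph{quadratic} in the principal curvatures; one must also confirm that the curvilinear-coordinate derivative produced by Eq.~\ref{eq:nH} for $\tilde\psi$ is indeed $\partial/\partial(f\circ\psi)$, consistently with $\partial/\partial(f\circ\psi)=f'(\psi)^{-1}\partial/\partial\psi$. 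A direct computational route — expanding the divergence $\nabla\cdot[\,\cdots]$ in Cartesian coordinates under the replacements $F\mapsto|f'|F$, $V\mapsto f''F^{2}+f'V$, and invoking that chain rule — would also close the argument, but it is markedly more tedious than the re-application sketched above.
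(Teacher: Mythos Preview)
Your argument is correct and is in fact more conceptual than the paper's own proof. The paper establishes Eq.~\ref{eq:2H_F_f} by expanding $\Delta(f\circ\psi)/|\nabla(f\circ\psi)|-\partial|\nabla(f\circ\psi)|/\partial(f\circ\psi)$ via the chain rule and simplifying until it reduces to $\sgn(f')\bigl[\Delta\psi/|\nabla\psi|-\partial|\nabla\psi|/\partial\psi\bigr]$, only then invoking Eq.~\ref{eq:nH} for $\psi$; for Eq.~\ref{eq:dlnF_R_f} it likewise expands $\nabla\cdot\bigl[\Delta(f\circ\psi)\,\nabla(f\circ\psi)/|\nabla(f\circ\psi)|^{2}\bigr]-\Delta\log|\nabla(f\circ\psi)|$ by hand, splits off the $\psi$-part, and uses the Laplacian decomposition Eq.~\ref{eq:F_LapOp} to kill the residual terms involving $f''/f'$. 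In other words, the paper follows precisely the ``direct computational route'' that you sketch in your final paragraph and set aside as more tedious. Your re-application strategy bypasses this algebra: once one checks that $\tilde\psi=f\circ\psi$ meets the hypotheses of Proposition~\ref{prop:conn0} and that the level sets---hence the intrinsic $\mathcal R$---are unchanged while $\widetilde{\Tr(\hat W)}=\sgn(f')\,\Tr(\hat W)$, the two claimed identities are literally Eqs.~\ref{eq:nH} and~\ref{eq:dlnF_R} written in tilded variables. The paper's approach has the minor virtue of being self-contained at the level of Cartesian calculus (no appeal to the geometric identification of $\widetilde{\Tr(\hat W)}$ with $\sgn(f')\,\Tr(\hat W)$ is needed), but yours makes the diffeomorphic invariance manifest as a structural consequence of the covariance of the whole framework rather than as an algebraic cancellation.
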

\begin{proof}By the chain rule, we have\begin{align*}\nabla (f\circ\psi)=\frac{\D f}{\D\psi}\nabla\psi,\quad\Delta(f\circ\psi)=\frac{\D f}{\D\psi}\Delta\psi+\frac{\D ^{2}f}{\D\psi^{2}}|\nabla\psi|^{2}, \end{align*}thus we may compute\begin{align*}\frac{ \Delta(f\circ\psi)}{ |\nabla (f\circ\psi)|}-\frac{\partial |\nabla (f\circ\psi)|}{\partial(f\circ\psi)}={}&\frac{\D f/\D\psi}{|\D f/\D\psi|}\frac{\Delta\psi}{|\nabla\psi|}+\frac{\D^{2} f/\D\psi^{2}}{|\D f/\D\psi|}\vert \nabla\psi\vert-\frac{1}{\D f/\D\psi}\frac{\partial}{\partial\psi}\left\vert \frac{\D f}{\D\psi}\nabla\psi \right\vert=\frac{\D f/\D\psi}{|\D f/\D\psi|}\left(\frac{\Delta\psi}{|\nabla\psi|}-\frac{\partial |\nabla\psi|}{\partial\psi}\right),\end{align*}which confirms Eq.~\ref{eq:2H_F_f} through Eq.~\ref{eq:nH}. Next, we evaluate\begin{align*}\nabla\cdot\left[ \Delta(f\circ\psi)\frac{\nabla(f\circ\psi)}{ |\nabla (f\circ\psi)|^{2}} \right]-
\Delta\log  |\nabla (f\circ\psi)|={}&\nabla\cdot\left[ \left(\frac{\D f}{\D\psi}\Delta\psi+\frac{\D ^{2}f}{\D\psi^{2}}|\nabla\psi|^{2}\right)\frac{\dfrac{\D f}{\D\psi}\nabla\psi}{ \left|\dfrac{\D f}{\D\psi}\nabla\psi\right|^{2}} \right]-
\Delta\log\left\vert \frac{\D f}{\D\psi}\nabla\psi \right|\notag\\={}&\nabla\cdot\left[ \left(\Delta\psi+\frac{\D ^{2}f/\D\psi^{2}}{\D f/\D\psi}|\nabla\psi|^{2}\right)\frac{\nabla\psi}{ |\nabla\psi|^{2}} \right]-
\Delta\log\left\vert \frac{\D f}{\D\psi}\nabla\psi \right|\notag\\={}&\nabla\cdot\left( \Delta\psi\frac{\nabla\psi}{ |\nabla\psi|^{2}} \right)-
\Delta\log  |\nabla\psi|+\nabla\cdot\left( \frac{\D ^{2}f/\D\psi^{2}}{\D f/\D\psi}\nabla\psi \right)-
\Delta\log\left\vert \frac{\D f}{\D\psi} \right|,\end{align*}and quote the decomposition of Laplacian (Eq.~\ref{eq:F_LapOp}) to deduce\begin{align*}\nabla\cdot\left( \frac{\D ^{2}f/\D\psi^{2}}{\D f/\D\psi}\nabla\psi \right)-
\Delta\log\left\vert \frac{\D f}{\D\psi} \right|={}&|\nabla\psi|^{2}\frac{\D^{2} }{\D\psi^{2}}\log\left\vert \frac{\D f}{\D\psi} \right|+(\Delta\psi)\frac{\D }{\D\psi}\log\left\vert \frac{\D f}{\D\psi} \right|-
\Delta\log\left\vert \frac{\D f}{\D\psi} \right|\notag\\={}&|\nabla\psi|^{2}\frac{\D^{2} }{\D\psi^{2}}\log\left\vert \frac{\D f}{\D\psi} \right|+(\Delta\psi)\frac{\D }{\D\psi}\log\left\vert \frac{\D f}{\D\psi} \right|-\left[|\nabla\psi|^{2}\frac{\partial ^{2}}{\partial\psi ^{2}}+(\Delta\psi)\frac{\partial }{\partial
\psi }\right]\log\left\vert \frac{\D f}{\D\psi} \right|=0,\end{align*}which completes the verification of Eq.~\ref{eq:dlnF_R_f}.\qed\end{proof}

\section{Scalar Curvature Formulae in Low Dimensions\label{sec:low_dim}}
\subsection{Examples from Classical Analysis\label{subsec:CA}}The Ricci scalar curvature identity (Eq.~\ref{eq:dlnF_R}) in low dimensions can also be independently verified with brute force. \begin{example}[Ricci Scalar Curvature in Low Dimensions]For any real-valued, three-times continuously differentiable function $ \psi\in C^3(\Omega;\mathbb R)$  defined in a two-dimensional domain $ \Omega\subset \mathbb R^2$, one has the exact identity\begin{align}\label{eq:2D_triv_id}(\partial^2_x+\partial^2_y)\log \sqrt{(\partial_x\psi)^2+(\partial_y\psi)^2}=\partial_{x}\left[ \frac{\mathcal (\partial_x^2\psi+\partial_y^2\psi)\partial_{x}\psi}{ {(\partial_x\psi)^2+(\partial_y\psi)^2}} \right]+\partial_{y}\left[\frac{\mathcal (\partial_x^2\psi+\partial_y^2\psi)\partial_{y}\psi}{ {(\partial_x\psi)^2+(\partial_y\psi)^2}} \right],\quad\text{ wherever }|\nabla\psi|\neq0,\end{align}which degenerates into the obvious relation $ \partial_x^2\log |\partial_x\psi|=\partial_x(\partial^2_x\psi/\partial_x\psi)$ when there is no dependence along the $y$-axis.

For a surface embedded   in $ \mathbb R^3$ depicted by the implicit function $ \psi(x,y,z)=0$, the Gaussian curvature $ K=\mathcal R/2$ satisfies\begin{align}
2K={}&\partial_{x}\left[ \frac{\mathcal (\partial_x^2\psi+\partial_y^2\psi+\partial_z^2\psi)\partial_{x}\psi}{ {(\partial_x\psi)^2+(\partial_y\psi)^2+(\partial_z\psi)^2}} \right]+\partial_{y}\left[\frac{\mathcal (\partial_x^2\psi+\partial_y^2\psi+\partial_z^2\psi)\partial_{y}\psi}{ {(\partial_x\psi)^2+(\partial_y\psi)^2+(\partial_z\psi)^2}} \right]+\partial_{z}\left[ \frac{\mathcal (\partial_x^2\psi+\partial_y^2\psi+\partial_z^2\psi)\partial_{z}\psi}{ {(\partial_x\psi)^2+(\partial_y\psi)^2+(\partial_z\psi)^2}} \right]\notag\\&-(\partial^2_x+\partial^2_y+\partial_z^{2})\log \sqrt{(\partial_x\psi)^2+(\partial_y\psi)^2+(\partial_z\psi)^2}.\label{eq:psi_K}
\end{align}In particular, for the Monge form $ z=f(x,y)$, one has\begin{align}
K=\frac{1}{[1+(\partial_{x}f)^{2}+(\partial_{y}f)^{2}]^{2}}\det\begin{pmatrix}\partial_{x}^2f & \partial_x\partial_yf \\
\partial_y\partial_xf & \partial_y^2f \\
\end{pmatrix}.\label{eq:K_Monge}
\end{align}\end{example}\begin{proof}Writing $ \psi(x,y)$ as a bivariate function of $ \zeta=x+iy,\overline\zeta=x-iy$, then we have $ \partial_{\raisebox{-.5pt}{$_\zeta$}}\psi=(\partial_x\psi-i\partial_y\psi)/2$ and $\partial_{\overline\zeta}\psi=(\partial_x\psi+i\partial_y\psi)/2 $, and both sides of Eq.~\ref{eq:2D_triv_id} equal $2\partial_{\raisebox{-.5pt}{$_\zeta$}}\partial_{\overline\zeta}\log (\partial_{\raisebox{-.5pt}{$_\zeta$}}\psi\partial_{\overline\zeta}\psi)$.

The right-hand side of  Eq.~\ref{eq:psi_K} is equivalent to \begin{align*}&\frac{2}{[(\partial_x\psi)^2+(\partial_y\psi)^2+(\partial_z\psi)^2]^{2}(\partial_z\psi)^2}\times\notag\\&\times\det\begin{pmatrix}\partial_{z}\psi(\partial^{2}_{x}\psi\partial_z\psi-2\partial_x\psi\partial_x\partial_z\psi)+(\partial_x\psi)^2\partial^2_z\psi & \partial_{z}\psi(-\partial_{x}\psi\partial_{y}\partial_{z}\psi+\partial_{x}\partial_{y}\psi\partial_{z}\psi-\partial_{x}\partial_{z}\psi\partial_{y}\psi)+\partial _{x}\psi\partial_{y}\psi\partial^2_z\psi \\
\partial_{z}\psi(-\partial_{x}\psi\partial_{y}\partial_{z}\psi+\partial_{x}\partial_{y}\psi\partial_{z}\psi-\partial_{x}\partial_{z}\psi\partial_{y}\psi)+\partial _{x}\psi\partial_{y}\psi\partial^2_z\psi & \partial_{z}\psi(\partial^{2}_{y}\psi\partial_z\psi-2\partial_y\psi\partial_y\partial_z\psi)+(\partial_y\psi)^2\partial^2_z\psi \\
\end{pmatrix},\end{align*}which is a standard formula in differential geometry. Taking $ \psi(x,y,z)=z-f(x,y)$, we recover Eq.~\ref{eq:K_Monge}.
\qed\end{proof}\begin{remark}Despite being a vacuous truth, the differential identity Eq.~\ref{eq:2D_triv_id} (when appropriately rewritten) can serve a good purpose for the geometric analysis of two-dimensional problems. See, for example, the applications  in \S\ref{sec:p_harmonic}.
\eor\end{remark}

\begin{example}[Planar Harmonic Function]\label{cor:planar}Let $\varphi(x,y) $ be a harmonic function in a certain domain in $ \mathbb R^2 $, then  \[\left( \frac{\partial^{2}}{\partial x^{2}} +\frac{\partial^{2}}{\partial y^{2}} \right)\log|\nabla\varphi(x,y)|=0.\]\end{example}\begin{proof}From complex analysis, one can rewrite    $|\nabla\varphi(x,y)|=|f'(\zeta)|$ where $\zeta=x+iy$
and $f(\zeta)$ is constructed from $ \varphi(x,y)$ and its harmonic conjugate. As  $\log |\nabla\varphi(x,y)|=\mathrm{Re}\log
f'(\zeta)$ is the real part of a complex analytic function, it must be a two-dimensional harmonic function. \qed\end{proof}\begin{example}[Conical Harmonic Function]\label{cor:CES}In spherical coordinates $ \bm r=(r\sin\theta\cos\phi,r\sin\theta\sin\phi,r\cos\phi)$, the function $ \varphi(\bm r)=h(\tan\frac\theta2,\phi)$ is harmonic so long as $h(\rho,\phi) $ defines a two-dimensional harmonic function in polar coordinates: $ (\partial^2_\rho+\rho^{-1}\partial_\rho+\rho^{-2}\partial^2_\phi)h(\rho,\phi)=0$. The level set of $ \varphi(\bm r)$ is a conical surface (necessarily with vanishing scalar curvature) excluding the apex $ \bm r=0$. The following identity is satisfied for $ |\bm r|\neq0$:\[\Delta\log|\nabla\varphi(\bm r)|=\frac{1}{r^2}\left[ \frac{\partial}{\partial r}\left( r^{2} \frac{\partial}{\partial r}\right)+\frac{1}{\sin\theta}\frac{\partial}{\partial\theta} \left( {\sin\theta}\frac{\partial}{\partial\theta} \right)+\frac{1}{\sin^2\theta}\frac{\partial^2}{\partial\phi^2}\right]\log |\nabla\varphi(\bm r)|=0.\]\end{example}\begin{proof}With the substitution $\rho=\tan\frac\theta2 $, we obtain\[(\D\rho)^2+\rho^2(\D\phi)^2=\frac{(\D\theta)^2+\sin^2\theta(\D\phi)^2}{4\cos^4\frac{\theta}{2}}\text{ and }\frac{\partial^2}{\partial\rho^2}+\frac{1}{\rho}\frac{\partial}{\partial\rho}+\frac{1}{\rho^2}\frac{\partial^2}{\partial\phi^2}=4\cos^4\frac{\theta}{2}\left[ \frac{1}{\sin\theta}\frac{\partial}{\partial\theta} \left( {\sin\theta}\frac{\partial}{\partial\theta} \right)+\frac{1}{\sin^2\theta}\frac{\partial^2}{\partial\phi^2}\right].\]Therefore, we have \[ |\nabla\varphi(\bm r)|^2=\frac{1}{r^2}\left[  \left( {}\frac{\partial h(\tan\frac\theta2,\phi)}{\partial\theta} \right)^{2}+\frac{1}{\sin^2\theta}  \left( {}\frac{\partial h(\tan\frac\theta2,\phi)}{\partial\phi} \right)^{2}\right]=\frac{1}{4r^{2}\cos^4\frac{\theta}{2}}\left[  \left( {}\frac{\partial h(\rho,\phi)}{\partial\rho} \right)^{2}+\frac{1}{\rho^{2}}  \left( {}\frac{\partial h(\rho,\phi)}{\partial\phi} \right)^{2}\right],\]and \begin{align*}\Delta\log|\nabla\varphi(\bm r)|={}&\frac{1}{r^2}\left[ \frac{\partial}{\partial r}\left( r^{2} \frac{\partial}{\partial r}\right)+\frac{1}{\sin\theta}\frac{\partial}{\partial\theta} \left( {\sin\theta}\frac{\partial}{\partial\theta} \right)\right]\log\frac{1}{2r\cos^2\frac{\theta}{2}}\notag\\&+\frac{1}{4r^2\cos^4\frac{\theta}{2}}\left( \frac{\partial^2}{\partial\rho^2}+\frac{1}{\rho}\frac{\partial}{\partial\rho}+\frac{1}{\rho^2}\frac{\partial^2}{\partial\phi^2} \right)\log\sqrt{  \left( {}\frac{\partial h(\rho,\phi)}{\partial\rho} \right)^{2}+\frac{1}{\rho^{2}}  \left( {}\frac{\partial h(\rho,\phi)}{\partial\phi} \right)^{2}}\notag\\={}&\frac{1}{4r^2\cos^4\frac{\theta}{2}}\left( \frac{\partial^2}{\partial\rho^2}+\frac{1}{\rho}\frac{\partial}{\partial\rho}+\frac{1}{\rho^2}\frac{\partial^2}{\partial\phi^2} \right)\log\sqrt{  \left( {}\frac{\partial h(\rho,\phi)}{\partial\rho} \right)^{2}+\frac{1}{\rho^{2}}  \left( {}\frac{\partial h(\rho,\phi)}{\partial\phi} \right)^{2}}.\end{align*}Now, quoting the result in Example~\ref{cor:planar}, we see that $ \Delta\log|\nabla\varphi(\bm r)|=0$.\qed\end{proof}
\subsection{Examples from Electrostatics\label{subsec:CP}}
Many worked examples in electrostatics provide analytic expressions for the geometry of equipotential surfaces (level sets of the electrostatic potential $ \varphi(\bm r)$, which is a harmonic function) and the corresponding spatial distributions of the electric field intensity $E(\bm r)=|\nabla\varphi(\bm r)| $.  Several specific cases listed below thus serve as direct verifications of the  relation  $ \Delta\log E(\bm r)+2K(\bm r)=0$, where $ K(\bm r)=\mathcal R(\bm r)/2$ is the Gaussian curvature of the equipotential surface. \begin{example}[Isolated Ellipsoidal Conductor]\label{cor:IEC}For an isolated conducting ellipsoid surface\begin{align}\frac{x^2}{a^2}+\frac{y^2}{b^2}+\frac{z^2}{c^2}=1
\quad(a>b>c)\label{eq:ellipsoid}\end{align}with total charge $Q$, the surface charge is distributed as \cite[Ref.][p.~22]{Landau}\[\sigma(\bm r)=\frac{Q}{4\pi
abc}\left/\sqrt{\frac{x^2}{a^4}+\frac{y^2}{b^4}+\frac{z^2}{c^4}}\right.=\frac{Q}{4\pi}\sqrt[4]{\frac{K(\bm r)}{a^2b^2c^2}},\]
 thus on the conductor surface (with $\bm r(x,y,z) $ fulfilling Eq.~\ref{eq:ellipsoid}), the electrostatic field intensity satisfies $\log E=\log\sqrt[4]{K}+\mathrm{const}$ and
 the following identity is satisfied:\[\left( \frac{x^2}{a^4}+\frac{y^2}{b^4}+\frac{z^2}{c^4} \right)^2\left( \frac{\partial^{2}}{\partial x^{2}} +\frac{\partial^{2}}{\partial y^{2}} +\frac{\partial^{2}}{\partial z^{2}}\right)\log E(\bm r)+\frac{2}{a^{2}b^{2}c^2}=0.\]\end{example}\begin{proof}Following the convention in Ref.~\cite{Landau}, we introduce the curvilinear
coordinates $ \bm r=\bm r(\xi,\eta,\zeta)$ ($\xi\geq-c^2\geq\eta\geq-b^2\geq\zeta\geq-a^2$) as
the three roots of the following cubic equation in $u$:\[\frac{x^2}{a^2+u}+\frac{y^2}{b^2+u}+\frac{z^2}{c^2+u}=1.\]From
the argument in \cite[Ref.][\S4]{Landau}, the electrostatic potential is given by\footnote{We have converted the statements in \cite[Ref.][\S4]{Landau} to SI units, wherever applicable.} \[4\pi\epsilon_{0}\varphi(\xi,\eta,\zeta)=\frac{Q}{2}\int_\xi^{+\infty}\frac{\D s}{\sqrt{(s+a^2)(s+b^2)(s+c^2)}}\]and field intensity is distributed
as \[\log E(\xi,\eta,\zeta)=-\log\sqrt{(\xi-\eta)(\xi-\zeta)}+\text{const},\]and
the equipotential surfaces (the $\xi=\text{const}$ surfaces), being ellipsoids confocal with the one shown
in Eq.~\ref{eq:ellipsoid}, has Gaussian curvature\[K(\xi,\eta,\zeta)=\frac{(a^2+\xi)(b^2+\xi)(c^2+\xi)}{(\xi-\eta)^2(\xi-\zeta)^2}.\]
With the Laplace operator \cite[Ref.][p.~19]{Landau}:\begin{align*}\Delta={}&\frac{4}{(\xi-\eta)(\zeta-\xi)(\eta-\zeta)}\left[(\eta-\zeta)\left(R_\xi\frac{\partial}{\partial\xi}\right)^2+(\zeta-\xi)\left(R_\eta\frac{\partial}{\partial\eta}\right)^2+(\xi-\eta)\left(R_\zeta\frac{\partial}{\partial\zeta}\right)^2\right],\notag\\R_u={}&\sqrt{(u+a^2)(u+b^2)(u+c^2)},\quad u=\xi,\eta,\zeta\end{align*}
one can verify that \[-2K=-2\frac{(a^2+\xi)(b^2+\xi)(c^2+\xi)}{(\xi-\eta)^2(\xi-\zeta)^2}=\Delta\log E\] indeed holds.\qed\end{proof}\begin{remark}
The ellipsoidal coordinate system employed in the proof becomes ill-defined for the spheroid case (where either $a=b >c$ or $a>b=c$), but the relation $ E\propto\sqrt[4]K$ and $ 2K+\Delta\log E=0$ remains valid, as the readers may check on their own.

In the fully degenerate case $a=b=c$, the equipotential surfaces form a family of
concentric spheres. On the surface of radius $r$, the Gaussian curvature
$K=1/r^2$, and the spatial distribution $\log E(r,\theta,\phi)=-2\log r+\text{const}$.
It is trivially true that \begin{align*}\Delta\log E=\frac{1}{r^2}\frac{\partial}{\partial r}\left(r^2 \frac{\partial\log E}{\partial r} \right)=-\frac{2}{r^2}=-2K.\end{align*}For  a ``point
charge'' in higher  dimensional space $ \mathbb R^{n+1}$, we have a power law  $ \log E(\bm r)=-n\log |\bm r|+\mathrm{const}$ instead of  the Coulomb inverse square law. Then we may perform  the following computations for a hypersphere $\{\bm x\in\mathbb R^{n+1}||\bm x|=r\} $ of radius $ r$:\[\Delta\log E=\frac{\partial^{2}\log E}{\partial r^2}+\frac{n}{r}\frac{\partial\log E}{\partial r}=-\frac{n(n-1)}{r^{2}},\] which is  consistent with the fact that a hypersphere of radius  $r$ in $ \mathbb R^{n+1}$ has scalar curvature $\mathcal R=n(n-1)/r^2$.
\eor\end{remark}

Below are two more examples concerning conductors at electrostatic equilibrium with external sources.\begin{example}[Rectangular Box Conductor Interacting with Electric Charges]\label{cor:box_ex}Consider a grounded rectangular box conductor at electrostatic equilibrium with a distribution of electric charges (with density $ \varrho(\bm r)$) that vanishes in a neighborhood of the box boundary:\begin{align*}\begin{cases}\nabla^2\varphi(\bm r)=-\frac{\varrho(\bm r)}{\epsilon_0}, & \bm r\in \Omega_{0}:=(0,a)\times(0,b)\times(0,c),\supp\varrho\Subset\Omega_0; \\
\varphi(\bm r)=0, & \bm r\in \partial\Omega_0. \\
\end{cases}\end{align*}For a boundary point $ \bm r\in\partial\Omega_0 $ that is not situated on the edges or corners of the rectangular box, we have $\Delta\log|\nabla\varphi(\bm r)|=0$.\end{example}\begin{proof}The electrostatic potential $ \varphi(\bm r)$ can be represented as $\epsilon_{0}\varphi(\bm r)=\iiint_{\Omega_0}\varrho(\bm r')G_D(\bm r,\bm r')\D^3\bm r' $ where the Dirichlet Green function $G_{D}(\bm r,\bm r') $ is given by\[G_{D}(\bm r,\bm r')=\frac{8}{abc}\sum_{\ell=1}^\infty\sum_{m=1}^\infty\sum_{n=1}^\infty\frac{\sin\frac{\ell\pi x}{a}\sin\frac{\ell\pi x'}{a}\sin\frac{m\pi y}{b}\sin\frac{m\pi y'}{b}\sin\frac{n\pi z}{c}\sin\frac{n\pi z'}{c}}{\frac{\ell^2}{a^2}+\frac{m^2}{b^2}+\frac{n^2}{c^2}}.\]Now, at a boundary point  $ \bm r\in (0,a)\times(0,b)\times\{c\}$ on the top side of the rectangular box, we have $ \partial^2_x\varphi(\bm r)=\partial^2_y\varphi(\bm r)=\partial^2_z\varphi(\bm r)=\partial_x\partial_y\varphi(\bm r)=0$ judging from the properties of $ G_D(\bm r,\bm r')$. In order to verify that $ \Delta\log (E^2)=0$, it is sufficient to check $ E^2\Delta(E^2)-|\nabla(E^2)|^2=0$. As we have \[E^2\Delta(E^2)=2E^{2}\Tr\{[\nabla\nabla\varphi(\bm r)]^{2}\}=4[\partial_z\varphi(\bm r)]^{2}\left\{ [\partial_{x}\partial_z\varphi(\bm r)]^{2}+[\partial_{y}\partial_z\varphi(\bm r)]^{2} \right\}\]according to the  Bochner-Weitzenb\"ock formula in Euclidean space, and \[|\nabla(E^2)|^2=\{\partial_x[|\partial_z\varphi(\bm r)|^2]\}^{2}+\{\partial_y[|\partial_z\varphi(\bm r)|^2]\}^{2}=4[\partial_z\varphi(\bm r)]^{2}\left\{ [\partial_{x}\partial_z\varphi(\bm r)]^{2}+[\partial_{y}\partial_z\varphi(\bm r)]^{2} \right\}\]for  $ \bm r\in (0,a)\times(0,b)\times\{c\}$, the identity  $ \Delta\log (E^2)=0$ is verified. Similarly, we can double-check $\Delta\log|\nabla\varphi(\bm r)|=0$ on the other five faces of the rectangular box.  \qed\end{proof}\begin{example}[Ellipsoid Conductor in Uniform External Field]Consider a grounded conducting ellipsoid surface with axes $a>b>c$ (defined by Eq.~\ref{eq:ellipsoid}), placed in uniform external field $ E_0\bm e_x$. The electrostatic potential \cite[Ref.][p.~23]{Landau}\[\varphi(\bm r)=-E_0\sqrt{\frac{(\xi+a^2)(\eta+b^2)(\zeta+c^2)}{(b^2-a^2)(c^2-a^2)}}[1-F(\xi)],\quad F(\xi)=\frac{\mathlarger{\int}_\xi^{+\infty}\dfrac{\D s}{(s+a^2)\sqrt{(s+a^2)(s+b^2)(s+c^2)}}}{\mathlarger{\int}_0^{+\infty}\dfrac{\D s}{(s+a^2)\sqrt{(s+a^2)(s+b^2)(s+c^2)}}}\]satisfies\[\Delta\log |\nabla\varphi(\bm r)|+2\frac{a^2b^2c^2}{\eta^2\zeta^2}=0\]on the conductor surface where $\xi=0$.\end{example}\begin{proof}Upon the substitution $ F(0)=1$, we have the following expression for $\xi=0 $:\begin{align*}\Delta\log E(\bm r)={}&\frac{2 }{\eta\zeta}\left\{-\frac{ a^2 b^2 c^2}{\eta\zeta}-2 a^2 b^2 c^2   \left[\frac{F''(0)}{F'(0)}\right]^2+2 a^2 b^2 c^2\frac{F'''(0)}{F'(0)}  + a^2  (b^{2}+c^{2})\frac{F''(0)}{F'(0)}-\frac{(a^2 -3c^{2})(b^{2}+c^{2})}{2c^2} + a^2 -3 \frac{b^2 c^2}{a^2} \right\}.\end{align*}With the relations\[\frac{F''(0)}{F'(0)}=-\frac{3 b^2 c^2+a^2 (b^2+c^2)}{2 a^2 b^2 c^2},\quad \frac{F'''(0)}{F'(0)}=\frac{15 b^4 c^4+6 a^2 b^2 c^2 (b^2+c^2)+a^4 (3 b^4+2 b^2 c^2+3 c^4)}{4 a^4 b^4 c^4},\]we obtain\[\Delta\log E(\bm r)=-2\frac{a^2b^2c^2}{\eta^2\zeta^2},\] which is the claimed identity.\qed\end{proof}
\section{A Simple Application to $p$-Harmonic Functions in Low Dimensions\label{sec:p_harmonic}}

We consider a class of nonlinear partial differential equations \begin{align}\nabla\cdot(|\nabla\psi|^{p-2}\nabla\psi)=0,\quad p>1.\label{eq:p_Harmonic}\end{align}A solution  $\psi\in W^{1,p}(\Omega) $ \cite{Evans1982} to the above nonlinear equation  minimizes energy functional $ \int_{\Omega}|\nabla\psi|^p\D^{d}\bm r, d>1$ (which is the $L^p$ analog of the Dirichlet form  $ \int_{\Omega}|\nabla\psi|^2\D^{d}\bm r$) with homogeneous Dirichlet boundary conditions on each connected component of  $ \partial \Omega$. Functions satisfying Eq.~\ref{eq:p_Harmonic} are referred to as $ p$-harmonic functions \cite{MaOuZhang2010}. By convention, the attention is restricted to the scenario with nowhere vanishing gradients $ \nabla\psi(\bm r)\neq\mathbf0,\bm r\in \Omega$, so that one may recast  Eq.~\ref{eq:p_Harmonic} into the form of ``inhomogeneous Laplace equation''
\begin{align}0=\Delta\psi+(p-2)\nabla\psi\cdot\nabla\log F=\Delta\psi+(p-2)F\frac{\partial F}{\partial\psi},\quad \text{where }F:=|\nabla\psi|.\end{align}Now that $V=(2-p)F\partial F/\partial\psi$, we immediately arrive at the following geometric identities, according to Eqs.~\ref{eq:nH} and \ref{eq:dlnF_R}\begin{align}
\Tr(\hat W)={}&(1-p)\frac{\partial F}{\partial\psi},\label{eq:TrW_p}\\\mathcal R={}&-\Delta\log F+(2-p)F^2\frac{\partial^{2}\log F}{\partial\psi^{2}}+(2-p)^{2}\left( \frac{\partial F}{\partial\psi} \right)^2=F\Delta_{\Sigma}\frac1F+(1-p)F^2\frac{\partial^{2}\log F}{\partial\psi^{2}}+(1-p)(2-p)\left(\frac{\partial F}{\partial\psi}\right)^2\notag\\={}&F\Delta_{\Sigma}\frac1F+(1-p)F\frac{\partial ^{2}F}{\partial\psi^{2}}+(1-p)^{2}\left(\frac{\partial F}{\partial\psi}\right)^2.\label{eq:R_p}
\end{align}Here, Eq.~\ref{eq:TrW_p} is consistent with Eq.~4.3 in Ref.~\cite{MaOuZhang2010}. From the geometric inequality $ (d-2)[\Tr(\hat W)]^2-(d-1)\mathcal R=(d-1)^2\mathrm{var}[\mathrm{spec}(\hat W)]\geq0$, we obtain\begin{align}\frac{(d-p)(p-1)}{d-1}\left(\frac{\partial F}{\partial\psi}\right)^2\geq F\Delta_{\Sigma}\frac1F+(1-p)F^2\frac{\partial^{2}\log F}{\partial\psi^{2}}.\end{align}

For $p\neq2$, we may rewrite Eq.~\ref{eq:R_p} as\begin{align}
\mathcal R=F\Delta_{\Sigma}\frac1F+\frac{1-p}{2-p}F^p\frac{\partial^2(F^{2-p})}{\partial\psi^2}=F\Delta_{\Sigma}\frac1F+F^p\frac{\partial}{\partial\psi}\frac{\Tr(\hat W)}{F^{p-1}},\label{eq:R_p_alt}
\end{align}and the rightmost expression in the equation above remains valid for $p=2$.

Now, we may test the validity and usefulness of the formulae  above in the following proposition by recovering the $ d=2$ case of  Theorem 1.2 in Ref.~\cite{MaOuZhang2010}.  \begin{proposition}[Extremal Properties of Planar $p$-Harmonic Functions]Let $ \psi(\bm r),\bm r\in \Omega\subset\mathbb R^d$ be a $p$-harmonic function, with $p>1$. Then its level sets satisfy\begin{align}\frac{\partial}{\partial\psi}\frac{\mathcal R}{F^p}=\frac{1}{F^{p-1}\sqrt g}\partial_{i}\left[ \frac{1}{p-1}\left(g^{ij}F^{p-1}\sqrt{g}\partial_j\frac{\Tr(\hat W)}{F^{p+1}}\right )-\frac{2}{p}\left(g^{ik}b^j_kF^{p-2}\sqrt{g}\partial_j\frac1{F^{p}}\right)\right]+\frac{\partial^{2}}{\partial\psi^{2}}\frac{\Tr(\hat W)}{F^{p-1}},\label{eq:R_Fp_diff}\end{align}where $ (b^j_k)$ is the matrix representation of the Weingarten transformation $ \hat W$.

In particular, for $d=2$, we have the equality for   $ -F^{1-p}\Tr(\hat W)=|\nabla\psi|^{1-p}\kappa$ \begin{align}\label{eq:min_prin_p_2D}\left(\frac{\Delta_\Sigma}{p-1}+F^{2}\frac{\partial^{2}}{\partial\psi^{2}}\right)\frac{\kappa}{F^{p-1}}+2\frac{\kappa}{F^{p-1}}\frac{(p-2)^2}{(p-1)}\left(\frac{\partial\log  F}{\partial s}\right)^2=0,\end{align}wherever $ \nabla( F^{1-p}\kappa)=\mathbf0$, so that a non-negative-valued $ |\nabla\psi|^{1-p}\kappa$ attains its minimum at the boundary  of a domain. (Here in Eq.~\ref{eq:min_prin_p_2D}, $\partial /\partial s$ is the tangential  derivative with respect to arc length parameter of the level set, and $ \Delta_\Sigma=\partial^2/\partial s^2$ is the Laplace-Beltrami operator on the level set.)

Furthermore, for $d=2$, we have the equality  \begin{align}F^{\alpha}
\left(\frac{\Delta_\Sigma}{p-1}+F^{2}\frac{\partial^{2}}{\partial\psi^{2}}\right)\frac{\kappa}{F^{p-1+\alpha}}=-{}&\frac{\kappa^{3}}{F^{p-1}}\frac{\alpha(\alpha+2-p)}{(p-1)^{2}}-\frac{\kappa}{F^{p-1}}\frac{2(p-2)^2+\alpha(\alpha+3p-6)}{(p-1)}\left(\frac{\partial\log  F}{\partial s}\right)^2,\label{eq:min_prin_p_2D_alpha}
\end{align}wherever $ \nabla( F^{1-p-\alpha}\kappa)=\mathbf0$. Thus, if either of the following  \begin{align}\begin{cases}\alpha\in(-\infty,p-2]\cup[0,2-p]\cup[2(2-p),+\infty), & 1<p\leq2 \\
\alpha\in(-\infty,2(2-p)]\cup[2-p,0]\cup[ p-2,+\infty),\ & p>2
\end{cases}\label{eq:cases_alpha_p}\end{align}  happens, then a non-negative-valued $ |\nabla\psi|^{1-p-\alpha}\kappa$ attains its minimum at the boundary  of a domain. For $ 3/2\leq p\leq3$, the minimum of a non-negative function $ \kappa(\bm r),\bm r\in D\cup\partial D$ is  attained at the boundary $ \partial D$.\end{proposition}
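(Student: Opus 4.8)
The plan is to read the whole proposition off two inputs already in hand: the field-line evolution of the Ricci scalar (Proposition~\ref{prop:Ric_Max}, in the divergence form Eq.~\ref{eq:dR_phi1}) and the $p$-harmonic dictionary Eqs.~\ref{eq:TrW_p}--\ref{eq:R_p_alt}. For the arbitrary-dimensional identity Eq.~\ref{eq:R_Fp_diff} I would start from the rightmost form of Eq.~\ref{eq:R_p_alt}, namely $\mathcal R/F^{p}=F^{1-p}\Delta_\Sigma(1/F)+\partial_\psi(\Tr(\hat W)/F^{p-1})$, and apply $\partial_\psi=\partial_0$. The second summand yields $\partial_\psi^{2}(\Tr(\hat W)/F^{p-1})$ at once. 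For $\partial_0[F^{1-p}\Delta_\Sigma(1/F)]$ I would expand $\Delta_\Sigma\phi=\tfrac{1}{\sqrt g}\partial_i(g^{ij}\sqrt g\,\partial_j\phi)$ and use $\partial_0g^{ij}=-2g^{ik}b^{j}_{k}/F$ and $\partial_0\log\sqrt g=\Tr(\hat W)/F$ (the Remark after Proposition~\ref{prop:conn0} and Eq.~\ref{eq:4HE}), $\partial_0\partial_j=\partial_j\partial_0$, and the $p$-harmonic relation $\partial_\psi F=\Tr(\hat W)/(1-p)$ (Eq.~\ref{eq:TrW_p}), whence $\partial_\psi(1/F)=\Tr(\hat W)/((p-1)F^{2})$. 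The term $F^{-p}\Tr(\hat W)\,\Delta_\Sigma(1/F)$ coming from differentiating the prefactor $F^{1-p}$ cancels exactly the term produced by $\partial_0(1/\sqrt g)$, and the surviving combination of the $\partial_j\Tr(\hat W)$, $\Tr(\hat W)\partial_jF$ and $b^{j}_{k}\partial_jF$ contributions reassembles — after a short check — into the divergence $\tfrac{1}{F^{p-1}\sqrt g}\partial_i[\cdots]$ of Eq.~\ref{eq:R_Fp_diff}, the only care being to carry the factor $F^{1-p}$ correctly across the divergence.

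For $d=2$ the level set is a curve, so $\sqrt g\equiv1$, $\Delta_\Sigma=\partial_s^{2}$, $b^{1}_{1}=\Tr(\hat W)=-\kappa$, and its intrinsic curvature is trivially zero, $\mathcal R\equiv0$; hence the left side of Eq.~\ref{eq:R_Fp_diff} vanishes. I would then rewrite the relation entirely in $h:=\kappa/F^{p-1}$ and $\ell:=\log F$, arriving at an \emph{everywhere}-valid identity of the form $(\tfrac{\Delta_\Sigma}{p-1}+F^{2}\partial_\psi^{2})h=\tfrac{7-3p}{p-1}(\partial_s\ell)(\partial_sh)-\tfrac{2(p-2)(p-3)}{p-1}h(\partial_s\ell)^{2}-\tfrac{2(p-2)}{p-1}h\,\partial_s^{2}\ell$. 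The term $\partial_s^{2}\ell$ is eliminated by the $\mathcal R\equiv0$ constraint, which is just Eq.~\ref{eq:R_p} at $d=2$ together with $\partial_\psi F=\kappa/(p-1)$, i.e.\ $\partial_s^{2}\ell=(\partial_s\ell)^{2}+\kappa^{2}-F\partial_\psi\kappa$. At a point where $\nabla h=\mathbf0$ one has $\partial_sh=0$ (killing the first term) and $\partial_\psi h=0$; the latter forces $F\partial_\psi\kappa=\kappa^{2}$ there, so $\partial_s^{2}\ell=(\partial_s\ell)^{2}$ and the coefficient of $h(\partial_s\ell)^{2}$ collapses to $-2(p-2)^{2}/(p-1)$, which is Eq.~\ref{eq:min_prin_p_2D}.

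For the $\alpha$-family, write $L:=\tfrac{\Delta_\Sigma}{p-1}+F^{2}\partial_\psi^{2}$ and $h_\alpha:=\kappa/F^{p-1+\alpha}=hF^{-\alpha}$. Leibniz' rule for the second-order operator $L$ gives $F^{\alpha}Lh_\alpha=Lh-h_\alpha L(F^{\alpha})-\tfrac{2}{p-1}\partial_s(F^{\alpha})\partial_sh_\alpha-2F^{2}\partial_\psi(F^{\alpha})\partial_\psi h_\alpha$; at a point where $\nabla h_\alpha=\mathbf0$ the last two terms drop, and there $\nabla h=\alpha h\,\nabla\ell$. Substituting $\partial_sh=\alpha h\,\partial_s\ell$ and $\partial_\psi h=\alpha h\,\partial_\psi\ell$ into the everywhere-identity of the previous paragraph puts $Lh$ in closed form, while $L(F^{\alpha})$ is evaluated from $\partial_s^{2}\ell$ and $\partial_\psi^{2}\ell$, which follow from the $\mathcal R\equiv0$ constraint and from $\partial_\psi\kappa=(p-1+\alpha)\kappa^{2}/((p-1)F)$ — exactly what $\nabla h_\alpha=\mathbf0$ gives for the $\psi$-derivative. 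Collecting the $\kappa(\partial_s\ell)^{2}$- and $\kappa^{3}$-terms reproduces Eq.~\ref{eq:min_prin_p_2D_alpha}. This is the step I expect to be the main obstacle: the critical-point structure of $h_\alpha$ differs from that of $h$, so the $\alpha h\,\nabla\ell$ corrections must be carried through all of the preceding identity; in particular $\partial_\psi^{2}\ell$ must retain the factor $\alpha+p-2$ (not $\alpha$), and the matching of the two rational coefficients in Eq.~\ref{eq:min_prin_p_2D_alpha} is sensitive to precisely such slips.

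Finally, the extremal statements follow from a maximum-principle reading. In the orthogonal coordinates $(\psi,s)$ with $\md\underline s^{2}=F^{-2}\md\psi^{2}+\md s^{2}$, the operator $L=\tfrac{1}{p-1}\partial_s^{2}+F^{2}\partial_\psi^{2}$ is uniformly elliptic for $p>1$. Running the same manipulations \emph{without} imposing the critical-point condition upgrades Eqs.~\ref{eq:min_prin_p_2D} and~\ref{eq:min_prin_p_2D_alpha} to genuine quasilinear elliptic PDEs for $h$ and $h_\alpha$, with the quadratic term $F^{p}h\,\partial_\psi h$ absorbed as a first-order coefficient and the $\kappa^{3}\propto h_\alpha^{3}$ term as a zeroth-order one; since $h,h_\alpha\ge0$, the resulting zeroth-order coefficients are $\ge0$ exactly when the numerical coefficients $\alpha(\alpha+2-p)$ and $2(p-2)^{2}+\alpha(\alpha+3p-6)$ of Eq.~\ref{eq:min_prin_p_2D_alpha} are $\ge0$, so that $h$ (resp.\ $h_\alpha$) is then a supersolution of a second-order elliptic operator without zeroth-order term and the weak minimum principle forces its minimum onto $\partial D$. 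It remains only to solve the two sign conditions: $\alpha(\alpha+2-p)\ge0$ is a product of linear factors with roots $0$ and $p-2$, and $2(p-2)^{2}+\alpha(\alpha+3p-6)$ is a quadratic in $\alpha$ of discriminant $(p-2)^{2}$ with roots $2-p$ and $2(2-p)$; intersecting the two sign regions gives precisely the two lists in Eq.~\ref{eq:cases_alpha_p}. Specializing $\alpha=1-p$ makes $|\nabla\psi|^{1-p-\alpha}\kappa=\kappa$, and $1-p$ lies in the admissible set iff $3/2\le p\le3$ (indeed $1-p\le p-2\iff p\ge3/2$ when $p\le2$, and $1-p\le2(2-p)\iff p\le3$ when $p>2$), which is the last assertion.
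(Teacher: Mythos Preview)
Your proposal is correct and follows essentially the same route as the paper: differentiate Eq.~\ref{eq:R_p_alt} in $\psi$ using $\partial_\psi g^{ij}$, $\partial_\psi\log\sqrt g$ and Eq.~\ref{eq:TrW_p} to obtain Eq.~\ref{eq:R_Fp_diff}; specialize to $d=2$ with $\mathcal R\equiv0$ to extract the elliptic identity for $\kappa/F^{p-1}$; then pass to $\kappa/F^{p-1+\alpha}$ by Leibniz and read off the sign conditions on the quadratics in $\alpha$. The only organizational differences are that the paper invokes $p$-harmonicity in the compact form $\partial_\psi(F^{p-1}\sqrt g)=0$ (your hand-tracked cancellation of $F^{-p}\Tr(\hat W)\,\Delta_\Sigma(1/F)$ is exactly this), and that the paper uses the $\mathcal R=0$ constraint to eliminate $\Delta_\Sigma(1/F)$ one step earlier, landing directly on the $(p-2)^2$ coefficient in its everywhere-identity (your version with the residual $\partial_s^2\ell$ term is equivalent and reduces identically once $\partial_s^2\ell=(\partial_s\ell)^2+\kappa^2-F\partial_\psi\kappa$ is substituted); your explicit remark that the minimum principle needs the \emph{everywhere} form of the PDE, not just the critical-point form, is a point the paper leaves implicit.
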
\begin{proof}We divide both sides of Eq.~\ref{eq:R_p_alt} by $F^p$ and differentiate, to arrive at\begin{align*}\frac{\partial}{\partial\psi}\frac{\mathcal R}{F^p}=\frac{\partial}{\partial\psi}\left(\frac{1}{F^{p-1}}\Delta_{\Sigma}\frac1F\right)+\frac{\partial^{2}}{\partial\psi^{2}}\frac{\Tr(\hat W)}{F^{p-1}}=\frac{1}{F^{p-1}\sqrt g}\partial_{i}\frac{\partial}{\partial\psi}\left(g^{ij}\sqrt{g}\partial_j\frac1F\right)+\frac{\partial^{2}}{\partial\psi^{2}}\frac{\Tr(\hat W)}{F^{p-1}},\end{align*}where the $p$-harmonicity has been exploited in the form of $\partial_\psi(F^{p-1}\sqrt g)=0 $. Now, we simplify\begin{align*}\frac{\partial}{\partial\psi}\left(g^{ij}\sqrt{g}\partial_j\frac1F\right)=\frac{1}{p}\frac{\partial}{\partial\psi}\left(g^{ij}F^{p-1}\sqrt{g}\partial_j\frac1{F^{p}}\right)\end{align*}with the relations $ \partial_{\psi}g^{ij}=-2g^{ik}b^j_k/F$ and $ \partial_{\psi}(F^{-p}/p)=F^{-p-1}\Tr(\hat W)/(p-1)$, to obtain\begin{align*}\frac{\partial}{\partial\psi}\left(g^{ij}\sqrt{g}\partial_j\frac1F\right)=-\frac{2}{p}\left(g^{ik}b^j_kF^{p-2}\sqrt{g}\partial_j\frac1{F^{p}}\right)+\frac{1}{p-1}\left(g^{ij}F^{p-1}\sqrt{g}\partial_j\frac{\Tr(\hat W)}{F^{p+1}}\right),\end{align*}which proves Eq.~\ref{eq:R_Fp_diff}, a formula not found  in Ref.~\cite{MaOuZhang2010}.

For $d=2$, we have $ \mathcal R\equiv0$, and $ g^{ik}b^j_k=g^{ij}\Tr(\hat W)=-g^{ij}\kappa$. This brings us from  Eq.~\ref{eq:R_Fp_diff} to{\allowdisplaybreaks[3]\begin{align}0={}&\frac{1}{F^{p-1}\sqrt g}\partial_{i}\left[ \frac{1}{p-1}\left(g^{ij}F^{p-1}\sqrt{g}\partial_j\frac{\Tr(\hat W)}{F^{p+1}}\right )-\frac{2}{p}\left(g^{ij}\Tr(\hat W)F^{p-2}\sqrt{g}\partial_j\frac1{F^{p}}\right)\right]+\frac{\partial^{2}}{\partial\psi^{2}}\frac{\Tr(\hat W)}{F^{p-1}}\notag\\={}&\frac{1}{F^{p-1}\sqrt g}\partial_{i}\left[ \frac{1}{p-1}\left(g^{ij}F^{p-3}\sqrt{g}\partial_j\frac{\Tr(\hat W)}{F^{p-1}}\right )-2\frac{p-2}{p-1}\left(g^{ij}\frac{\Tr(\hat W)}{F}\sqrt{g}\partial_j\frac1{F}\right)\right]+\frac{\partial^{2}}{\partial\psi^{2}}\frac{\Tr(\hat W)}{F^{p-1}}\notag\\={}&\frac{1}{F^2}\frac{\Delta_\Sigma(F^{1-p}\Tr(\hat W))}{p-1}+\frac{p-3}{(p-1)F^{2}}\frac{\partial\log  F}{\partial s}\frac{\partial}{\partial s}\frac{\Tr(\hat W)}{F^{p-1}}+2\frac{p-2}{p-1}\frac{\Tr(\hat W)}{F}\frac{\partial}{\partial\psi}\frac{\Tr(\hat W)}{F^{p-1}}\notag\\&-2\frac{p-2}{(p-1)F^{p-1}}\frac{\partial}{\partial s}\left[\frac{\Tr(\hat W)}{F^{p-1}}F^{p-2}\right]\frac{\partial}{\partial s}\frac1{F}+\frac{\partial^{2}}{\partial\psi^{2}}\frac{\Tr(\hat W)}{F^{p-1}}\notag\\={}&\left[\frac{1}{F^2}\frac{\Delta_\Sigma(F^{1-p}\Tr(\hat W))}{p-1}+\frac{\partial^{2}}{\partial\psi^{2}}\frac{\Tr(\hat W)}{F^{p-1}}\right]+\left[ \frac{3p-7}{(p-1)F^{2}}\frac{\partial\log  F}{\partial s}\frac{\partial}{\partial s}\frac{\Tr(\hat W)}{F^{p-1}}+2\frac{p-2}{p-1}\frac{\Tr(\hat W)}{F}\frac{\partial}{\partial\psi}\frac{\Tr(\hat W)}{F^{p-1}} \right]\notag\\&+2\frac{\Tr(\hat W)}{F^{p-1}}\frac{(p-2)^2}{(p-1)F^{2}}\left(\frac{\partial\log  F}{\partial s}\right)^2,\label{eq:min_prin_p_2D_1}\end{align}where $ s$ is the arc length of the level set, and we have evaluated $ \Delta_\Sigma(1/F)$ from  Eq.~\ref{eq:R_p_alt} (with $ \mathcal R=0$) in the penultimate step.

Now, for any real number $ \alpha$, at points where  $ \nabla[ F^{1-p-\alpha}\Tr(\hat W)]=\mathbf0$, we may transform Eq.~\ref{eq:min_prin_p_2D_1} into\begin{align*}0={}&\left[\frac{1}{F^2}\frac{\Delta_\Sigma(F^{1-p-\alpha}\Tr(\hat W)F^{\alpha})}{p-1}+\frac{\partial^{2}}{\partial\psi^{2}}\frac{\Tr(\hat W)F^{\alpha}}{F^{p-1+\alpha}}\right]+\left[ \frac{3p-7}{(p-1)F^{2}}\frac{\partial\log  F}{\partial s}\frac{\partial}{\partial s}\frac{\Tr(\hat W)F^{\alpha}}{F^{p-1+\alpha}}+2\frac{p-2}{p-1}\frac{\Tr(\hat W)}{F}\frac{\partial}{\partial\psi}\frac{\Tr(\hat W)F^{\alpha}}{F^{p-1+\alpha}} \right]\notag\\&+2\frac{\Tr(\hat W)}{F^{p-1}}\frac{(p-2)^2}{(p-1)F^{2}}\left(\frac{\partial\log  F}{\partial s}\right)^2\notag\\={}&\left[\frac{F^{\alpha}\Delta_\Sigma(F^{1-p-\alpha}\Tr(\hat W)) -\frac{\alpha F^{\alpha+1}\Tr(\hat W)}{F^{p-1+\alpha}}\Delta_\Sigma\frac{1}{F}+\alpha(\alpha+1)\frac{\Tr(\hat W)}{F^{p-1}}\left(\frac{\partial\log  F}{\partial s}\right)^2}{F^{2}(p-1)}+F^{\alpha}\frac{\partial^{2}}{\partial\psi^{2}}\frac{\Tr(\hat W)}{F^{p-1+\alpha}}+\frac{\Tr(\hat W)}{F^{p-1+\alpha}}\frac{\partial^{2}(F^{\alpha})}{\partial\psi^{2}}\right]\notag\\&+\left[ \frac{\alpha(3p-7)}{(p-1)F^{2}}\frac{\Tr(\hat W)}{F^{p+1}}\left(\frac{\partial\log  F}{\partial s}\right)^2+2\frac{p-2}{p-1}\frac{[\Tr(\hat W)]^{2}}{F^{p+\alpha}}\frac{\partial(F^{\alpha})}{\partial\psi} \right]+2\frac{\Tr(\hat W)}{F^{p-1}}\frac{(p-2)^2}{(p-1)F^{2}}\left(\frac{\partial\log  F}{\partial s}\right)^2\notag\\={}&\left[\frac{\Delta_\Sigma(F^{1-p-\alpha}\Tr(\hat W)) }{F^{2-\alpha}(p-1)}+F^{\alpha}\frac{\partial^{2}}{\partial\psi^{2}}\frac{\Tr(\hat W)}{F^{p-1+\alpha}}\right]+\frac{1}{F^{2}(p-1)}\left[\frac{\alpha[\Tr(\hat W)]^{2}}{F^{p+\alpha-2}}\frac{\partial(F^{\alpha})}{\partial\psi}-\frac{\alpha[\Tr(\hat W)]^{2}}{F^{2(p-2+\alpha)}}\frac{\partial(F^{2(\alpha-1)+p})}{\partial\psi}\right]\notag\\&+\left[ 2\frac{p-2}{p-1}\frac{[\Tr(\hat W)]^{2}}{F^{p+\alpha}}\frac{\partial(F^{\alpha})}{\partial\psi} \right]+\frac{\Tr(\hat W)}{F^{p-1}}\frac{2(p-2)^2+\alpha(\alpha+1)+\alpha(3p-7)}{(p-1)F^{2}}\left(\frac{\partial\log  F}{\partial s}\right)^2\notag\\={}&\left[\frac{\Delta_\Sigma(F^{1-p-\alpha}\Tr(\hat W)) }{F^{2-\alpha}(p-1)}+F^{\alpha}\frac{\partial^{2}}{\partial\psi^{2}}\frac{\Tr(\hat W)}{F^{p-1+\alpha}}\right]-\frac{[\Tr(\hat W)]^{3}}{(p-1)^{2}}\frac{\alpha^{2}-2\alpha(\alpha-1)-\alpha p+2\alpha(p-2)}{F^{p+1}}\notag\\&+\frac{\Tr(\hat W)}{F^{p-1}}\frac{2(p-2)^2+\alpha(\alpha+1)+\alpha(3p-7)}{(p-1)F^{2}}\left(\frac{\partial\log  F}{\partial s}\right)^2,\end{align*}}as claimed in Eq.~\ref{eq:min_prin_p_2D_alpha}. Solving the inequalities $ \alpha(\alpha+2-p)\geq0$ and $ 2(p-2)^2+\alpha(\alpha+3p-6)\geq0$, we obtain  Eq.~\ref{eq:cases_alpha_p}. Setting $ \alpha=1-p$ in Eq.~\ref{eq:cases_alpha_p}, we reach the range $ 3/2\leq p\leq3$. The conclusions drawn from Eqs.~\ref{eq:min_prin_p_2D}-\ref{eq:cases_alpha_p} agree with Ref.~\cite{MaOuZhang2010}. \qed\end{proof}

\section{Application to Einstein-Hilbert Actions of High-Dimensional Equipotential Surfaces\label{sec:harmonic_sect_curv}}

During the study of a physical problem that will be described elsewhere, we encountered the following quantity\begin{align*}\oint_{\Sigma_\varphi}\mathcal R\D m(\Sigma_{\varphi})\end{align*}which is the Einstein-Hilbert action of a curved equipotential surface $ \Sigma_\varphi$ (\textit{i.e.~}level set of a harmonic function with level value $ \varphi$). For $ \dim(\Sigma_\varphi)=1$, the Einstein-Hilbert action vanishes identically.  For $ \dim(\Sigma_\varphi)=2$, the Einstein-Hilbert action is equal to $ 4\pi$ times the Euler-Poincar\'e characteristic $ \chi(\Sigma_\varphi)$, so it remains a local constant of $ \varphi$  so long as the field lines are not broken.

In the following proposition, we will present a convexity result for the less trivial cases where  $ \dim(\Sigma_\varphi)\geq3$. To emphasize that we are handling harmonic functions, we will replace the earlier notation $ F=|\nabla\psi|$ with $ E=|\nabla\varphi|$. Here, $ \varphi(\bm r)$ can be regarded as a high-dimensional ``electrostatic potential'', and $ E(\bm r)=|\nabla\varphi(\bm r)|$ the corresponding ``electric field intensity''.
Still, it will be tacitly assumed that $   E=|\nabla\varphi|$ is non-vanishing.

\begin{proposition}[Conditional Convexity of Einstein-Hilbert Action] The Einstein-Hilbert action \[\mathcal A(\varphi):=\oint_{\Sigma_\varphi}\mathcal R\D m(\Sigma_{\varphi})=\idotsint_{\Sigma_\varphi}\mathcal R\sqrt g\D u^1\cdots\D u^n\]satisfies\begin{align}\label{eq:A''}\frac{\mathcal A''(\varphi)}{6}=\oint_{\Sigma_\varphi}\frac{\frac{1}{3}b^{m}_mG^{ij}b_{ij}-R^{ij}R_{ij}+\frac12\mathcal R^2}{E^{2}}\D m(\Sigma_{\varphi})-\oint_{\Sigma_\varphi}G_{ik}\partial_i\frac{1}{E}\partial_k\frac1E\D m(\Sigma_{\varphi}).\end{align} Accordingly, for $ n\geq3$, we have $ \mathcal A''(\varphi)\geq0$ if  $\Sigma_\varphi$ has non-negative sectional curvature.
\end{proposition}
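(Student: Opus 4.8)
The plan is to differentiate the Einstein--Hilbert action \(\mathcal A(\varphi)=\idotsint_{\Sigma_\varphi}\mathcal R\sqrt g\,\D u^1\cdots\D u^n\) twice in the level value \(\varphi\). In the adapted curvilinear coordinates \((u^0=\varphi,u^1,\dots,u^n)\) the transverse coordinates \((u^1,\dots,u^n)\) label the (unbroken) field lines, so the domain of integration in \((u^1,\dots,u^n)\)-space is \(\varphi\)-independent and differentiation may be taken under the integral sign. First I combine \(\partial_\varphi\sqrt g=(\Tr\hat W/E)\sqrt g\) (Eq.~\ref{eq:4HE} with \(F=E\)) with the Ricci-scalar evolution Eq.~\ref{eq:dR_phi1}; the two contributions proportional to \(\Tr\hat W\,\mathcal R/E\) cancel, leaving \(\partial_\varphi(\mathcal R\sqrt g)=-2E^{-1}G^{ij}b_{ij}\sqrt g-2\,\partial_i(\beta^{ik}\sqrt g\,\partial_k\tfrac1E)\). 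Since \(\Sigma_\varphi\) is closed the divergence integrates to zero, so \(\mathcal A'(\varphi)=-2\oint_{\Sigma_\varphi}E^{-1}G^{ij}b_{ij}\,\D m(\Sigma_\varphi)\).

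For the second derivative I use the harmonicity relations \(\partial_\varphi E=-\Tr\hat W\) (Eq.~\ref{eq:nH} with \(V=0\)) and \(\partial_\varphi(E\sqrt g)=0\) (from \((E/\sqrt g)\partial_0(E\sqrt g)=V=0\) in the proof of Proposition~\ref{prop:conn0}), whence \(\partial_\varphi(\sqrt g/E)=2(\Tr\hat W/E^2)\sqrt g\) and therefore
\[
\mathcal A''(\varphi)=-2\oint_{\Sigma_\varphi}\frac{\partial_\varphi(G^{ij}b_{ij})}{E}\,\D m(\Sigma_\varphi)-4\oint_{\Sigma_\varphi}\frac{\Tr\hat W\,G^{ij}b_{ij}}{E^2}\,\D m(\Sigma_\varphi).
\]
The heart of the proof is the evaluation of \(\partial_\varphi(G^{ij}b_{ij})=\partial_\varphi(R^{ij}b_{ij}-\tfrac12\mathcal R\,\Tr\hat W)\), obtained by differentiating each factor and substituting \(\partial_\varphi g^{ij}=-2b^{ij}/E\), the second-fundamental-form evolution \(\partial_\varphi b_{ij}=E^{-1}(\hat W^2)_{ij}-(\tfrac1E)_{;ij}\) (Eq.~\ref{eq:bij_evolv}), the Ricci-tensor evolution Eq.~\ref{eq:dRij}, the mean-curvature evolution Eq.~\ref{eq:H_evolv}, and the Ricci-scalar evolution Eq.~\ref{eq:dR_phi}. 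One then sorts the outcome into a purely algebraic part (contractions among \(b_{ij}\), \((\hat W^2)_{ij}\), \(R_{ij}\), \(\mathcal R\)) and a part linear in the covariant Hessian \((\tfrac1E)_{;ij}\), using the Gauss relations \(R_{ij}=\Tr\hat W\,b_{ij}-(\hat W^2)_{ij}\) and \(\mathcal R=(\Tr\hat W)^2-\Tr(\hat W^2)\) to consolidate. Integrating the Hessian terms by parts on the closed \(\Sigma_\varphi\), the contracted Bianchi identity \((G^{ij})_{;j}=0\) and the divergence-freeness \((\beta^{ik})_{;k}=0\) (Eq.~\ref{eq:beta_div_free}), together with the Codazzi--Mainardi symmetry \(b^k_{i;j}=b^k_{j;i}\), eliminate every divergence term and leave precisely the quadratic form \(\oint G_{ik}\partial_i\tfrac1E\partial_k\tfrac1E\,\D m(\Sigma_\varphi)\); the surviving algebraic integrand reassembles into \(6E^{-2}(\tfrac13\Tr\hat W\,G^{ij}b_{ij}-R^{ij}R_{ij}+\tfrac12\mathcal R^2)\), which together with the \(-4\oint E^{-2}\Tr\hat W\,G^{ij}b_{ij}\) term from the display above reproduces Eq.~\ref{eq:A''}. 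I expect this step --- tracking every coefficient, in particular the overall factor \(6\), through two differentiations and two integrations by parts --- to be the main obstacle; everything else is mechanical.

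It remains to check the signs of the two integrands for \(n\geq3\). Fix a point and diagonalize \(\hat W\) in an orthonormal principal frame with principal curvatures \(k_1,\dots,k_n\); then \(R_{ij}=((\sum_\ell k_\ell)k_i-k_i^2)\delta_{ij}\), hence \(G_{ij}=-(\sum_{\ell<m,\ \ell,m\neq i}k_\ell k_m)\delta_{ij}\). Non-negative sectional curvature of \(\Sigma_\varphi\) is equivalent to \(k_\ell k_m\geq0\) for all \(\ell\neq m\), which makes \(-G\) positive semidefinite, so \(-\oint G_{ik}\partial_i\tfrac1E\partial_k\tfrac1E\,\D m(\Sigma_\varphi)\geq0\). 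For the other integrand, a short computation with elementary symmetric functions (using \(G^{ij}b_{ij}=-3\sum_{a<b<c}k_ak_bk_c\)) gives
\[
\tfrac13\Tr\hat W\,G^{ij}b_{ij}-R^{ij}R_{ij}+\tfrac12\mathcal R^2=\sum_{a<b<c}k_ak_bk_c(k_a+k_b+k_c)+8\sum_{a<b<c<d}k_ak_bk_ck_d.
\]
Since \(k_\ell k_m\geq0\) for \(\ell\neq m\) forces all nonzero \(k_i\) to share one sign, and the right-hand side is invariant under \(k_i\mapsto-k_i\), we may take \(k_i\geq0\), making it manifestly non-negative (and it vanishes identically for \(n\leq2\), consistently with \(\mathcal A\) being locally constant there). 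Combining the two estimates yields \(\mathcal A''(\varphi)\geq0\).
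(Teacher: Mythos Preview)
Your strategy is correct and closely parallels the paper's: differentiate $\mathcal A(\varphi)$ twice using the evolution equations of Propositions~\ref{prop:bij_H} and~\ref{prop:Ric_Max}, then integrate by parts on the closed $\Sigma_\varphi$ with the contracted Bianchi identity and the Codazzi symmetry to isolate the $G_{ik}\,\partial_i(1/E)\,\partial_k(1/E)$ term. Your sign analysis at the end is essentially the paper's; your identity $\sum_{a<b<c}k_ak_bk_c(k_a{+}k_b{+}k_c)+8\sum_{a<b<c<d}k_ak_bk_ck_d$ is just a regrouping of the paper's $\sum_{i<j,\;i,j\neq\ell}k_ik_jk_\ell^2+8\sum_{i<j<\ell<m}k_ik_jk_\ell k_m$, and your observation that non-negative sectional curvature forces all nonzero $k_i$ to share a sign is exactly what the paper uses implicitly.

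The one organizational difference is in the second differentiation. You attack $\partial_\varphi(G^{ij}b_{ij})$ head-on, which is conceptually direct but --- as you candidly note --- leaves a substantial bookkeeping obstacle. The paper instead first rewrites $\mathcal A'(\varphi)=2\oint E^{-1}\beta^{ij}G_{ij}\,\D m-(2-n)\oint E^{-1}b^\ell_\ell\mathcal R\,\D m$ and computes the compact evolution $\tfrac{1}{\sqrt g}\partial_\varphi(\beta^{ik}\sqrt g)=-g^{ik}(\Delta_\Sigma\tfrac1E+\tfrac{\mathcal R}{2E})-3E^{-1}G^{ik}+(\tfrac1E)^{;ik}$ (Eq.~\ref{eq:d_beta_ij}). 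This packages many of the Codazzi cancellations you would otherwise chase by hand, and the single Hessian term $(\tfrac1E)^{;ik}$ it produces pairs immediately with $G_{ik}/E$ so that one application of $(G^{ik})_{;k}=0$ yields the quadratic form $-\oint G_{ik}\,\partial_i\tfrac1E\,\partial_k\tfrac1E\,\D m$. In short: your route is shorter to describe but longer to execute; the paper's $\beta^{ij}$ detour is longer to set up but makes the coefficient chase you flagged essentially automatic.
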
\begin{proof}It follows directly from Eq.~\ref{eq:dR_phi1} that\begin{align*}\mathcal A'(\varphi)=-2\oint_{\Sigma_\varphi}\frac{G^{ij}b_{ij}}{E}\D m(\Sigma_{\varphi})=2\oint_{\Sigma_\varphi}\frac{G_{ij}(\beta^{ij}-b^\ell_\ell g^{ij})}{E}\D m(\Sigma_{\varphi})=2\oint_{\Sigma_\varphi}\frac{\beta^{ij}G_{ij}}{E}\D m(\Sigma_{\varphi})-2\left( 1-\frac{n}{2} \right)\oint_{\Sigma_\varphi}\frac{b^\ell_\ell \mathcal R}{E}\D m(\Sigma_{\varphi}),\end{align*}where $G_{ij}=R_{ij}-\frac12g_{ij}\mathcal R$ is the covariant Einstein tensor. Before computing the second-order derivative $ \mathcal A''(\varphi)$, we need{\allowdisplaybreaks[3]\begin{align}&\frac{1}{\sqrt{g}}\frac{\partial(\beta^{ik}\sqrt g)}{\partial\varphi}=\frac{b^m_m\beta^{ik}}{E}+\frac{\partial}{\partial\varphi}(b^m_mg^{ik}-g^{ij}g^{k\ell}b_{j\ell})\notag\\={}&\frac{b^m_m\beta^{ik}}{E}-g^{ik}\Biggl( \Delta_\Sigma\frac1E+\frac{b^\ell_mb^m_\ell}{E} \Biggr)-\frac{2b^m_mg^{ij}b^k_j}{E}+\frac{2g^{im}b^j_mb^k_j}{E}+\frac{2g^{km}b^\ell_mb^i_\ell}{E}-g^{ij}g^{k\ell}\left( \frac{b^m_jb_{m\ell}}{E}-\left( \frac1E\right)_{\raisebox{4pt}{$_{;j\ell}$}} \right)\notag\\={}&\frac{b^m_m(b^\ell_\ell g^{ik}-g^{ij}b^k_j)}{E}-g^{ik}\Biggl( \Delta_\Sigma\frac1E+\frac{b^\ell_mb^m_\ell}{E} \Biggr)+\Biggl(-\frac{2b^m_mg^{ij}b^k_j}{E}+\frac{2g^{im}b^j_mb^k_j}{E}\Biggr)+\left(\frac{2g^{km}b^\ell_mb^i_\ell}{E}-\frac{g^{ij}b^m_jb^{k}_{m}}{E}\right)+\left( \frac1E\right)^{\raisebox{-4pt}{$^{;ik}$}}\notag\\={}&-\frac{b^m_mg^{ij}b^k_j}{E}-g^{ik}\Biggl( \Delta_\Sigma\frac1E-\frac{\mathcal  R}{E} \Biggr)-\frac{2g^{im}R^k_m}{E}+\frac{g^{km}b^\ell_mb^i_\ell}{E}+\left( \frac1E\right)^{\raisebox{-4pt}{$^{;ik}$}}\notag\\={}&-g^{ik}\Biggl( \Delta_\Sigma\frac1E-\frac{\mathcal  R}{E} \Biggr)-\frac{3R^{ik}}{E}+\left( \frac1E\right)^{\raisebox{-4pt}{$^{;ik}$}}=-g^{ik}\Biggl( \Delta_\Sigma\frac1E+\frac{\mathcal  R}{2E} \Biggr)-\frac{3G^{ik}}{E}+\left( \frac1E\right)^{\raisebox{-4pt}{$^{;ik}$}}.\label{eq:d_beta_ij}\end{align}}Thus, we may evaluate\begin{align*}&\frac{\D}{\D\varphi}\oint_{\Sigma_\varphi}\frac{\beta^{ij}G_{ij}}{E}\D m(\Sigma_{\varphi})=\oint_{\Sigma_\varphi}\frac{G_{ij}}{E}\frac{1}{\sqrt{g}}\frac{\partial(\beta^{ij}\sqrt g)}{\partial\varphi}\D m(\Sigma_{\varphi})+\oint_{\Sigma_\varphi}\beta^{ij}\frac{\partial}{\partial\varphi}\frac{G_{ij}}{E}\D m(\Sigma_{\varphi})\notag\\={}&\left(\frac n2-1\right)\oint_{\Sigma_\varphi}\frac{\mathcal R}{E}\Delta_\Sigma\frac1E\D m(\Sigma_{\varphi})-3\oint_{\Sigma_\varphi}\frac{R^{ij}R_{ij}}{E^{2}}\D m(\Sigma_{\varphi})+\frac{5-n}{2}\oint_{\Sigma_\varphi}\frac{\mathcal R^2}{E^{2}}\D m(\Sigma_{\varphi})\notag\\&+\oint_{\Sigma_\varphi}\frac{G_{ik}}{E}\left( \frac1E\right)^{\raisebox{-4pt}{$^{;ik}$}}\D m(\Sigma_{\varphi})+\oint_{\Sigma_\varphi}\beta^{ij}\frac{\partial}{\partial\varphi}\frac{G_{ij}}{E}\D m(\Sigma_{\varphi}).\end{align*}where we have used the squared modulus of the Einstein tensor $ G^{ij}G_{ij}=R^{ij}R_{ij}+(\frac n4-1)\mathcal R^{2}$  in the last step.
From Eqs.~\ref{eq:dRij} and \ref{eq:dR_phi1}, it is not hard to see that \begin{align*}\beta^{ij}\frac{\partial}{\partial\varphi}\frac{R_{ij}}{E}={}&\frac{2G_{ik}}{E}\left( \frac1E\right)^{\raisebox{-4pt}{$^{;ik}$}}+\frac{\mathcal R^{2}}{E^{2}}-\frac{b^{\ell}_\ell}{E\sqrt{g}}\partial_i\left( \beta^{ik}\sqrt g\partial_k\frac1E \right),\notag\\\beta^{ij}\frac{\partial}{\partial\varphi}\frac{\frac12g_{ij}\mathcal R}{E}={}&\frac{\mathcal R^{2}}{E^{2}}+\frac{(n-1)b^{m}_m}{2}\frac{\partial}{\partial\varphi}\frac{\mathcal R}{E}=-\frac{\mathcal R^{2}}{E^{2}}-\frac{(n-1)b^{m}_mG^{ij}b_{ij}}{E^{2}}-\frac{(n-1)b^{m}_m}{E\sqrt{g}}\partial_i\left( \beta^{ik}\sqrt g\partial_k\frac1E \right),\end{align*}so we have\begin{align*}&\frac{\D}{\D\varphi}\oint_{\Sigma_\varphi}\frac{\beta^{ij}G_{ij}}{E}\D m(\Sigma_{\varphi})=\oint_{\Sigma_\varphi}\frac{G_{ij}}{E}\frac{1}{\sqrt{g}}\frac{\partial(\beta^{ij}\sqrt g)}{\partial\varphi}\D m(\Sigma_{\varphi})+\oint_{\Sigma_\varphi}\beta^{ij}\frac{\partial}{\partial\varphi}\frac{G_{ij}}{E}\D m(\Sigma_{\varphi})\notag\\={}&\left(\frac n2-1\right)\oint_{\Sigma_\varphi}\frac{\mathcal R}{E}\Delta_\Sigma\frac1E\D m(\Sigma_{\varphi})+(n-1)\oint_{\Sigma_\varphi}\frac{b^{m}_mG^{ij}b_{ij}}{E^{2}}\D m(\Sigma_{\varphi})-3\oint_{\Sigma_\varphi}\frac{R^{ij}R_{ij}}{E^{2}}\D m(\Sigma_{\varphi})+\frac{5-n}{2}\oint_{\Sigma_\varphi}\frac{\mathcal R^2}{E^{2}}\D m(\Sigma_{\varphi})\notag\\&+\oint_{\Sigma_\varphi}\frac{3G_{ik}}{E}\left( \frac1E\right)^{\raisebox{-4pt}{$^{;ik}$}}\D m(\Sigma_{\varphi})+(n-2)\oint_{\Sigma_\varphi}\frac{b^{\ell}_\ell}{E\sqrt{g}}\partial_i\left( \beta^{ik}\sqrt g\partial_k\frac1E \right)\D m(\Sigma_{\varphi}) \end{align*}as a consequence. Meanwhile,\begin{align*}\frac{\D}{\D\varphi}\oint_{\Sigma_\varphi}\frac{b^\ell_\ell \mathcal R}{E}\D m(\Sigma_{\varphi})=\oint_{\Sigma_\varphi}\frac{b^\ell_\ell }{E}\left[ -\frac{2G^{ij}b_{ij}}{E}-\frac{2}{\sqrt{g}}\partial_i\left( \beta^{ik}\sqrt g\partial_k\frac1E \right) \right]\D m(\Sigma_{\varphi})+\oint_{\Sigma_\varphi}\mathcal R\left( -\Delta_{\Sigma}\frac1E+\frac{\mathcal R }{E}\right)\D m(\Sigma_{\varphi})\end{align*}can be combined into the previous results to yield\begin{align*}\frac{\mathcal A''(\varphi)}{2}={}&\frac{\D}{\D\varphi}\oint_{\Sigma_\varphi}\frac{\beta^{ij}G_{ij}}{E}\D m(\Sigma_{\varphi})-\left( 1-\frac{n}{2} \right)\frac{\D}{\D\varphi}\oint_{\Sigma_\varphi}\frac{b^\ell_\ell \mathcal R}{E}\D m(\Sigma_{\varphi})\notag\\={}&\oint_{\Sigma_\varphi}\frac{b^{m}_mG^{ij}b_{ij}}{E^{2}}\D m(\Sigma_{\varphi})-3\oint_{\Sigma_\varphi}\frac{R^{ij}R_{ij}}{E^{2}}\D m(\Sigma_{\varphi})+\frac{3}{2}\oint_{\Sigma_\varphi}\frac{\mathcal R^2}{E^{2}}\D m(\Sigma_{\varphi})+\oint_{\Sigma_\varphi}\frac{3G_{ik}}{E}\left( \frac1E\right)^{\raisebox{-4pt}{$^{;ik}$}}\D m(\Sigma_{\varphi}).\end{align*}After we apply the contracted Bianchi identity $ (G_{ik})^{;i}=0$, Eq.~\ref{eq:A''}  emerges.

 In the principal curvature coordinates, the Einstein tensor behaves as\begin{align*}(G_{ij})_{1\leq i,j\leq n}=\begin{pmatrix}-g_{11}\mathlarger{\sum_{\substack{\ell<m\\\ell\neq1,m\neq1}}}k_\ell k_m &  & \multicolumn{2}{c}{\raisebox{-2.1ex}[0pt]{\Huge0}} \\
 & \ddots\ &  \\\multicolumn{2}{c}{\raisebox{-3.3ex}[0pt]{\Huge0}}
 &  & -g_{nn}\mathlarger{\sum_{\substack{\ell<m\\\ell\neq n,m\neq n}}}k_\ell k_m \\
\end{pmatrix},\end{align*}which is clearly a negative semidefinite matrix if $ \Sigma_\varphi$ has non-negative sectional curvature. In this case, we then have \begin{align}\frac{\mathcal A''(\varphi)}{6}\geq\oint_{\Sigma_\varphi}\frac{\frac{1}{3}b^{m}_mG^{ij}b_{ij}-R^{ij}R_{ij}+\frac12\mathcal R^2}{E^{2}}\D m(\Sigma_{\varphi}).\label{eq:A''_ineq}\end{align}

However, we may use principal curvatures to rewrite \begin{align}\frac{1}{3}b^{m}_mG^{ij}b_{ij}-R^{ij}R_{ij}+\frac12\mathcal R^2=\sum_{\substack{i<j\\i\neq\ell,j\neq\ell}}k_ik_jk_\ell^2+8\sum_{\substack{i<j<\ell<m}}k_ik_jk_\ell k_{m},\label{eq:comb_id}\end{align}where the second summand is left out for $n=3$. Such an identity as Eq.~\ref{eq:comb_id} is elementary though probably not self-evident. By inspection, the left-hand side of  Eq.~\ref{eq:comb_id} is a homogeneous quartic polynomial in the variables $ (k_1,\dots,k_n)$, where each variable cannot appear in a power higher than two. Terms like $ k_\ell^2k_m^2$ (where $\ell<m $) will not count towards the net sum, because $ -R^{ij}R_{ij}$ contributes $ -2k_\ell^2k_m^{2}$ while $ \frac12\mathcal R^2$ yields $ 2k^2_\ell k^2_m$ in their respective developments. Therefore, by symmetry,  the left-hand side of  Eq.~\ref{eq:comb_id} can be put into the form\begin{align*}a\sum_{\substack{i<j\\i\neq\ell,j\neq\ell}}k_ik_jk_\ell^2+b\sum_{\substack{i<j<\ell<m}}k_ik_jk_\ell k_{m},\end{align*}for some constants $ a$ and $b$. To determine that $ a=1$, simply  set $ k_1=k_2=k_3=1$ and $ k_4=\cdots=k_n=0$. To find out that $b=8 $, one may consider the special case where $ k_1=\cdots=k_n=1$.

To summarize,  Eqs.~\ref{eq:A''_ineq}  and \ref{eq:comb_id} explicitly demonstrate the non-negativity of $ \mathcal A''(\varphi)$ for equipotential surfaces $\Sigma_\varphi$ with non-negative sectional curvature.\qed\end{proof}\begin{remark}In the proof of the proposition above, we have used the following alternative forms of Eqs.~\ref{eq:nH} and \ref{eq:H_evolv}:\begin{align*}\Tr(\hat W)+\frac{\partial E}{\partial\varphi}=0,\quad \frac{\partial
\Tr(\hat W)}{\partial\varphi}=-\Delta_\Sigma\frac1E-\frac{
[\Tr(\hat W)]^{2}-\mathcal R}{E} .\end{align*}Here, the first formula is sensitive to the harmonic condition $ \Delta\varphi(\bm r)=0$, so the conclusion of this proposition does not apply to level sets of arbitrary functions. The second formula displayed above is morally equivalent to the scalar curvature identity $ \mathcal R+\Delta\log E=0$, as can be seen from the one-line proof:\begin{align*} \mathcal R+\Delta\log E=\mathcal R-E\Delta_\Sigma\frac1E+E^2\frac{\partial ^{2}\log E}{\partial\varphi ^{2}}=\mathcal R-E\Delta_\Sigma\frac1E-E^{2}\frac{\partial}{\partial\varphi}\frac{\Tr(\hat W)}{E}=\mathcal R-E\Delta_\Sigma\frac1E-E\frac{\partial
\Tr(\hat W)}{\partial\varphi}-E^2\Tr(\hat W)\frac{\partial}{\partial\varphi}\frac{1}{E},\end{align*}which hinges on a degenerate version of  Eq.~\ref{eq:Lap_lnF_mod}.\eor\end{remark}\bibliography{ScalarCurv}
\bibliographystyle{unsrt}

\end{document}